\def\string\COMMIT{]]..r..[[}]])
\newenvironment{ppmatrix}{\setlength\arraycolsep{0.1pt}\begin{pmatrix}}{\end{pmatrix}}
\newcommand{\FF}{\mathbb F}
\newcommand{\KK}{\mathbb K}
\newcommand{\cG}{\mathcal G}
\newcommand{\GG}{\mathbb G}
\newcommand{\Tr}{\mathrm{Tr}}
\newcommand{\HH}{\mathbb H}
\newcommand{\cC}{\mathcal C}
\newcommand{\fmm}{\mathfrak w}
\newcommand{\cH}{\mathcal H}
\newcommand{\PG}{\mathrm{PG}}
\newcommand{\fA}{\mathfrak A}
\newcommand{\cod}{\operatorname{codim}}
\newtheorem{theorem}{Theorem}
\newtheorem{lemma}[theorem]{Lemma}
\newtheorem{corollary}[theorem]{Corollary}
\theoremstyle{definition}
\author{Ilaria Cardinali and Luca Giuzzi}
\title{Implementing Line-Hermitian Grassmann codes}
\begin{document}
\maketitle
\begin{abstract}
  In~\cite{IL17c} we introduced line Hermitian Grassmann codes
  and determined their parameters.
  The aim of this paper is to present (in the spirit of \cite{IL17}) an algorithm for the point enumerator of a line Hermitian Grassmannian which can be usefully applied to get efficient encoders, decoders and error correction algorithms for the aforementioned codes.

  \end{abstract}
{\noindent\bfseries Keywords}:   Hermitian variety, Polar Grassmannian, Projective Code, Point Enumerator. \\
{\noindent\bfseries MSC(2010)}:  14M15, 94B27, 94B05.

\section{Introduction}\label{sec1}
Let $V$ be a vector space of dimension $K$ over a (finite) field
$\KK$ and suppose
$\Omega$ to be a {\it{projective system}} of $\PG(V)$ of order $N$, that is a set of $N$ distinct points of $\PG(V)$ such that $\dim\langle\Omega\rangle=\dim(V)$.
An \emph{enumerator} for $\Omega$ is a
bijection
$\iota:\{0,1,\ldots,N-1\}\to\Omega$ which can be easily computed and inverted.

Enumerators are interesting because of their relevance to applications, as they provide an efficient
way to represent and access the elements of a (ordered) list, without requiring a large amount of storage.

For example, when $\Omega$ is the pointset of a Grassmann variety, a (point) enumerator
efficiently constructs a subspace for any integer in the domain. This
plays a key role in the implementation of Grassmann codes, since it is possible to
determine the components of the codewords without having to explicitly construct their full
generator matrix,  a process which would be very expensive both
computationally and in terms of storage.

In the present paper we shall be concerned with point enumerators of line Hermitian Grassmannians. In this way, we continue a project started in~\cite{IL17} where we introduced point enumerators for line polar Grassmannians of orthogonal \cite{ILP14,IL16} and symplectic type
\cite{IL15}.

Before stating our main results, we shall recall the definition of Hermitian Grassmannians and set the notation in Section~\ref{sec1.1}; next, in Section~\ref{sec1.2}, we shall provide some basics about polar Grassmann codes. The organization of the paper and the main results are outlined in Section~\ref{organization of the paper}.

\subsection{Hermitian Grassmannians and their embeddings}\label{sec1.1}
Assume $\KK=\FF_{q^2}$ to be a finite field of order $q^2$
and let $V:=V(m, \KK)$ be a  $m$-dimensional vector space over $\KK$ and $k\in \{1,\dots, m-1\}$.
Let $\cG_{m,k}$  be the  $k$-Grass\-mann\-ian of the projective
space $\PG(V)$, that is the point--line
geometry whose points are the $k$-dimensional subspaces of $V$
and whose lines are the sets
\[ \ell_{W,T}:=\{ X: W\leq X\leq T, \dim X=k \} \]
with $\dim W=k-1$ and  $\dim T=k+1$.


Let
$e_{k}:\cG_{m,k}\to\PG(\bigwedge^kV)$ be the Pl\"ucker (or Grassmann) embedding of $\cG_{m,k}$, which maps
any arbitrary
$k$--dimensional subspace $X=\langle v_1,v_2,\ldots,v_k\rangle$ of
$V$ to the point $e_k(X):=[v_1\wedge v_2\wedge\cdots\wedge v_k]$ of $\PG(\bigwedge^kV)$.
Note that lines of $\cG_{m,k}$ are mapped onto (projective) lines of $\PG(\bigwedge^kV)$.
The dimension $\dim(e_k)$
of the embedding $e_k$ is defined as the vector dimension of the subspace spanned by its image.
It is well known that $\dim(e_k)={m\choose k}.$

The image $e_{k}(\cG_{m,k})$ of the Pl\"ucker embedding is a projective variety
of $\PG(\bigwedge^kV)$, called \emph{Grassmann variety} and denoted by $\GG(m,k)$.



Suppose now that $V$
is equipped with a non-degenerate Hermitian form $\eta$ of Witt index $n$ (hence
either $m=2n+1$ or $m=2n$).

The Hermitian $k$-Grassmannian  
induced by $\eta$ 
is defined for $k=1,\ldots,n$ as the geometry having as
points the totally $\eta$--isotropic
subspaces of $V$ of dimension $k$ and as lines
\begin{itemize}
\item for $k< n$, the sets of the form
   \[ \ell_{W,T}:=\{ X: W\leq X\leq T, \dim X=k \} \]
   with $T$ totally $\eta$--isotropic and $\dim W=k-1$, $\dim T=k+1$.
 \item for $k=n$, the sets of the form
   \[ \ell_{W}:=\{ X: W\leq X, \dim X=n, X\,\, {\text{totally $\eta$--isotropic}}\} \]
   with $\dim W=n-1$.
 \end{itemize}
 We will denote a Hermitian $k$-Grassmannian either by the symbol $\cH_{n,k}$ when we
 do not want to mention the parity of $m$ or by the symbols $\cH_{n,k}^{even}$ (for $m=2n$) respectively  $\cH_{n,k}^{odd}$ (for $m=2n+1$) when the parity of $m$ plays a significant role.

  Clearly, for $k=1,\ldots,n$, the point-set of $\cH_{n,k}$ is always
 a subset of that of $\cG_{m,k}$.
 If $k=1$, $\cH_{n,1}$ (also denoted $\cH_{n}$ or $\cH_m$) stands for a Hermitian polar space of rank $n$ and if $k=n$, $\cH_{n,n}$ is usually called \emph{Hermitian dual polar space of rank $n$}.
Let $\varepsilon_{n,k}:=e_{k}|_{\cH_{n,k}}$ be the restriction of the Pl\"ucker embedding $e_{k}$ of
$\cG_{m,k}$
to the Hermitian $k$-Grassmannian $\cH_{n,k}.$
The map $\varepsilon_{n,k}$ is an embedding of $\cH_{n,k}$ called
\emph{Pl\"ucker (or Grassmann) embedding} of $\cH_{n,k}$ in
$\PG(\bigwedge^kV)$; its dimension has been proved to be $\dim(\varepsilon_{n,k})={{\dim(V)}\choose k}$ for $\dim(V)$ even and $k$ arbitrary by Blok and Cooperstein~\cite{BC2012} and for $\dim(V)$ arbitrary and $k=2$ by Cardinali and Pasini~\cite{IP14}.
Consider now the following projective system of $\PG(\bigwedge^kV)$:
\begin{equation}\label{Hermitian variety}
 \HH_{n,k}:=\varepsilon_{n,k}(\cH_{n,k})=\{\varepsilon_{n,k}(X)\colon X {\text{ point of }} \cH_{n,k}\}\subset \PG(\bigwedge^k V).
\end{equation}

Note that if $k=2$ and $n>2$ then $\varepsilon_{n,2}$ maps lines of $\cH_{n,2}$ onto projective lines of $\PG(\bigwedge^2 V)$, independently of the parity of $\dim(V)$, i.e. the embedding is \emph{projective}. Otherwise, if $n=k=2$ and $m=\dim(V)=5$ then the lines of $\cH_{2,2}^{odd}$ are mapped onto Hermitian curves, while if $m=\dim(V)=4$ then lines of $\cH_{2,2}^{even}$ are mapped onto Baer sublines of $\PG(\bigwedge^2 V)$.
In the latter case $\HH_{2,2}^{even}\cong Q^-(5,q)$ is
contained in a proper subgeometry, defined over the subfield $\FF_q$, of $\PG(\bigwedge^2V)$.

\subsection{Line Hermitian Grassmann codes}\label{sec1.2}
Given a projective system $\Omega$  of $\PG(V)$ of order $N$, where
$\dim(V)=K$, we can construct a $[N,K]$-linear
code $\cC(\Omega)$ associated to $\Omega$  as
a code whose generator matrix is the $K\times N$-matrix whose columns are
vector representatives of the points of $\Omega$; see~\cite{tvn}.
There is a well-known relationship between the maximum number of
points of $\Omega$ lying in a hyperplane of $\PG(V)$
and the minimum Hamming distance $d_{\min}$ of $\cC(\Omega)$, namely
\[ d_{\min}=N-\max_{{\begin{subarray}{l}
                  \Pi\leq \PG(V)\\
                   \cod(\Pi)=1
                   \end{subarray}}}\left|\Pi\cap\Omega\right|. \]
The case in which $\Omega$ is the point-set of a
Grassmann variety $\GG(m,k)$ has been extensively studied; see e.g.~\cite{R1,R2,R3,N96,GPP2009,GK2013,KP13}.
In this case the associated codes $\cC(\Omega)$ are called \emph{Grassman codes}.

In a series of papers we have investigated codes arising when $\Omega$ is a proper subvariety of $\GG(m,k)$,
namely when $\Omega$
is the image under the Pl\"ucker embedding of a polar
line Grassmannian; see \cite{IL13,ILP14,IL16} for the orthogonal case and
\cite{IL15} for the symplectic case.
Following the same approach as of~\cite{IL13}, we defined in~\cite{IL17c}  \emph{Hermitian Grassmann codes} as those projective codes
arising from the Pl\"ucker embedding of a Hermitian  Grassmannian (see Equation~\eqref{Hermitian variety}) and we determined their minimum distance,
also  characterizing the words of minimum
weight.

In particular, for line Hermitian Grassmann codes, i.e. taking $k=2$, we proved in \cite{IL17c} the following.
\begin{theorem}\label{main theorem 1}
A line Hermitian Grassmann code defined by a non--degenerate
Hermitian form on a vector space $V(m,q^2)$ is a $[N,K,d_{\min}]$-linear code where
   \[ N=\frac{(q^m+(-1)^{m-1})(q^{m-1}-(-1)^{m-1})(q^{m-2}+(-1)^{m-3})(q^{m-3}-(-1)^{m-3})}{(q^{2}-1)^2(q^{2}+1)};\]
   \[K={m\choose 2};\]
  \[d_{\min}=\begin{cases}\displaystyle
    q^{4m-12}-q^{2m-6} & \text{ if $m=4,6$ .} \\
    q^{4m-12} & \text{ if $m\geq 8$ is even.} \\
    q^{4m-12}-q^{3m-9} & \text{ if $m$ is odd.}
  \end{cases}
\]
\end{theorem}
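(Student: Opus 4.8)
The values of $K$ and $N$ follow quickly from what has already been established. Since $k=2$, the dimension of the embedding is $\dim(\varepsilon_{n,2})=\binom{m}{2}$ by the result of Cardinali and Pasini~\cite{IP14} recalled above; as $\cC(\HH_{n,2})$ is by definition an $[N,K]$-code with $K=\dim\langle\HH_{n,2}\rangle=\dim(\varepsilon_{n,2})$ (indeed the image spans the whole of $\bigwedge^2 V$), we obtain $K=\binom{m}{2}$. The length $N$ is the number of points of $\cH_{n,2}$, that is, the number of totally $\eta$-isotropic $2$-dimensional subspaces of $V$. The plan is to compute this by a standard double count: multiply the number of isotropic points of the Hermitian polar space $\cH_n$ by the number of totally isotropic lines through a fixed such point and divide by the number of points on a line, using the known cardinalities of Hermitian polar spaces of rank $n$ in dimensions $2n$ and $2n+1$. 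Collecting the factors and simplifying should yield the stated expression for $N$.

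The substance of the theorem is the minimum distance, and here I would use the hyperplane description of $d_{\min}$ recalled in Section~\ref{sec1.2}. A hyperplane $\Pi$ of $\PG(\bigwedge^2 V)$ is the kernel of a nonzero linear functional $\phi$ on $\bigwedge^2 V$, which corresponds, up to a scalar, to a nonzero alternating bilinear form $\beta$ on $V$ via $\beta(u,v)=\phi(u\wedge v)$. A point $\varepsilon_{n,2}(\langle u,v\rangle)=[u\wedge v]$ lies in $\Pi$ precisely when $\beta(u,v)=0$; since $\beta$ is alternating this is equivalent to $\beta$ vanishing identically on the line $\langle u,v\rangle$. Consequently the weight of the codeword attached to $\beta$ equals the number of totally $\eta$-isotropic lines of $V$ that are \emph{not} totally isotropic for $\beta$, and $d_{\min}$ is obtained by minimising this quantity over all nonzero $\beta$, equivalently by maximising the number of $2$-subspaces that are simultaneously totally isotropic for both $\eta$ and $\beta$.

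The heart of the argument is therefore to count, for each alternating form $\beta$, the lines of $\cH_{n,2}$ on which $\beta$ vanishes, and to determine which $\beta$ makes this count largest. The relevant invariants of $\beta$ are its rank (an even number $2r$) together with the position of its radical $\Rad(\beta)$ relative to the Hermitian geometry of $\eta$, in particular how $\Rad(\beta)$ meets the isotropic subspaces of $\eta$. The plan is to stratify the alternating forms by these invariants, to express the number of common isotropic lines as an explicit polynomial in $q$ on each stratum by fibring over the radical and reducing to a Hermitian problem on a smaller space, and then to compare the resulting weights. Since the unitary group of $\eta$ acts on the codewords preserving weight, it suffices to treat one representative per orbit, so the optimisation becomes a finite comparison. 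The extremal forms are expected to be highly degenerate (large radical), and the comparison naturally splits according to the parity of $m$ and the size of the radical, which should account for the three regimes $m=4,6$, even $m\ge 8$, and odd $m$ in the statement.

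The main obstacle is precisely this last step: the count of common totally isotropic lines does not depend on $\rank(\beta)$ alone, because the interaction between the alternating form $\beta$ and the sesquilinear form $\eta$ can vary among forms of equal rank. One must control this interaction carefully, essentially classifying the pairs $(\eta,\beta)$ up to the unitary group of $\eta$, or at least isolating the finer invariants that govern the line count, and then verify that the configuration maximising the number of common isotropic lines yields exactly the claimed minimum weight in each case. I expect the small cases $m=4,6$ to need separate attention, since there the extremal radical is large relative to $m$ and boundary phenomena (such as $\HH_{2,2}^{even}$ lying in a proper subgeometry over $\FF_q$, as noted above) can alter the count.
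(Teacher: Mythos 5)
First, a caveat on the comparison itself: this paper does not prove Theorem~\ref{main theorem 1} at all; it is recalled from~\cite{IL17c}, where the proof appears, so the only check possible here is against that reference's method and the background facts restated in the present paper. Your treatment of $K$ and $N$ is sound: $K=\binom{m}{2}$ is exactly the Cardinali--Pasini dimension result for $\varepsilon_{n,2}$ quoted in Section~\ref{sec1.1}, and your double count (isotropic points of $\cH_m$, times totally isotropic lines through a fixed point --- i.e.\ points of the residual polar space $\cH_{m-2}$ --- divided by the $q^2+1$ points on a line) reproduces $N=\mu_m\mu_{m-2}/(q^2+1)$, which is Equation~\eqref{pts and lines} and agrees with the displayed formula. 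Your reduction of $d_{\min}$ is also the right frame: nonzero functionals on $\bigwedge^2 V$ correspond to nonzero alternating bilinear forms $\beta$ on $V$, a point $[u\wedge v]$ of $\HH_{n,2}$ lies in the hyperplane $\ker\phi$ iff $\beta(u,v)=0$, so the weight of the word attached to $\beta$ is the number of lines of $\cH_{n,2}$ on which $\beta$ does not vanish; this is the same picture underlying the encoding of Section~\ref{sec6}, where codewords are evaluations of the Gram matrix $W$ of an alternating form.

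The genuine gap is that everything after this reduction --- which is the entire substance of the theorem --- is missing. You propose to stratify the nonzero alternating forms by rank and by the position of $\Rad\beta$ relative to $\eta$, to compute on each stratum the number of lines totally isotropic for both forms, and then to compare; but you carry out none of these steps. No stratum count is computed, no extremal $\beta$ is exhibited, and none of the three values $q^{4m-12}-q^{2m-6}$, $q^{4m-12}$, $q^{4m-12}-q^{3m-9}$ is derived, even in the smallest case $m=4$. You yourself flag the decisive difficulty (``the count of common totally isotropic lines does not depend on $\rank(\beta)$ alone'') and then leave it unresolved; controlling the interaction between $\beta$ and $\eta$ --- classifying pairs up to the unitary group, or isolating the invariants that govern the count --- is precisely the hard work done in~\cite{IL17c}. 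Your guiding heuristic that the extremal forms are ``highly degenerate'' is stated without justification, and the shape of the answer (three regimes, with parity of $m$ and the exceptional cases $m=4,6$) shows the optimisation is delicate enough that such a heuristic cannot distinguish, say, $q^{4m-12}$ from $q^{4m-12}-q^{2m-6}$. As it stands, the proposal is a correct setup plus a program, not a proof of the minimum distance.
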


\subsection{Organization of the paper and Main Results}\label{organization of the paper}
In Section~\ref{sec2} we recall the notion of prefix enumeration and provide counting algorithms
for the points of $\cH_{n,2}$.
In Sections~\ref{sec3} and~\ref{sec4} we compute
the number of totally $\eta$-singular lines of $V$
spanned by  vectors with a prescribed prefix.
The complexity of the prefix enumerators is discussed in Section \ref{sec-compl}, where we prove our main result. \\

\noindent {\bf Main Theorem}{\it
\begin{itemize}
\item[(i)] The computational complexity for the number of
  points of a line  Hermitian
  Grassmannian of $V(m,q^2)$, whose representation
  begins with a prescribed prefix, is $O(m^2)$.
 \item[(ii)]   The computational complexity for a point enumerator of
  a line Hermitian Grassmannian of $V(m,q^2)$ is $O(q^4m^3)$.
 \end{itemize} }

\medskip

Section~\ref{sec6} is dedicated to applications of the scheme
introduced in Sections~\ref{sec3} and \ref{sec4} to line Hermitian Grassmann codes.
We also propose some encoding/decoding and
error correction strategies which act locally on the components of
the codewords.

 \section{Preliminaries}\label{sec2}

\subsection{Point enumerator: notation and basics}\label{sec2.1}
Let $\fA$ be an alphabet equipped with  a total order relation $\prec$ and let $\mathcal{O}\subseteq \fA^m$ be a set of $m$-uples
with entries in $\fA$. For any $\omega\in \mathcal{O}$ and $t\leq m$ a non-negative integer, we define the \emph{prefix of length $t$} or
\emph{$t$-prefix} of $\omega$ as the $t$-uple of the first $t$-entries of $\omega.$
If $\alpha=(\alpha_1,\dots, \alpha_t)\in \fA^t$ and $\beta=(\beta_1,\dots, \beta_s)\in \fA^s$, we shall write $\alpha|\beta$ to refer to the $(t+s)$-uple $(\alpha, \beta)=(\alpha_1,\dots, \alpha_t,\beta_1,\dots, \beta_s)\in \fA^{t+s}$ and say that $\alpha|\beta$ is the \emph{concatenation of $\alpha$ and $\beta$}.
If $t=0$ then $\alpha=\emptyset$ and we let $\emptyset|\beta=\beta$.
Accordingly, $\alpha$ is \emph{the $t$-prefix} of $\alpha|\beta$.

Given  $\alpha=(\alpha_i)_{i=1}^t\in \fA^t$, $1\leq t\leq m$,  define $\mathcal{O}^{\alpha}$ as the set
all the concatenations $\alpha|\beta\in{\mathcal O}$ where $\beta$ varies in $\fA^{m-t}$, i.e.
\[ \mathcal{O}^{\alpha}:=\{ \alpha|\beta\in\mathcal{O}\colon \beta\in\fA^{m-t}\}=\{ \omega\in\mathcal{O}\colon \omega_i=\alpha_i,
  \forall i=1,\ldots,t\}.\]
Put also
\[ \mathcal{O}^{\emptyset}:=\mathcal{O}\,\,\text{and}\,\, \fA^0=\emptyset. \]
Suppose the following function is given
\begin{equation}\label{psi}
\psi:\left\{ \begin{array}{l}
  \bigcup_{t=0}^{m}\fA^t\to\{0,\ldots,|\mathcal{O}|\} \\
  \\
  \alpha\to |\mathcal{O}^{\alpha}|
\end{array}\right.
\end{equation}
Clearly $\psi(\alpha)=0$ if and only if
there is no word in $\mathcal O$ with prefix $\alpha$
 and $\psi(\alpha)=|\mathcal{O}|$ if
and only if all words in $\mathcal O$ have $\alpha$ as prefix.

For any $\omega\in\mathcal{O}$ and $i\leq m$ write $\omega_{\leq i}:=(\omega_1,\ldots,
\omega_i)$ for the $i$-prefix of $\omega$ and let ${\mathbb I}=\{0,1,\ldots,|\mathcal{O}|-1\}$. Note that $\omega_{\leq 0}=\emptyset$ and, for $x\in \fA$, $\psi(\omega_{\leq 0}|x)=\psi(x)$.

The function $\iota$ defined by
\begin{equation} \label{iota}
  \iota:\begin{cases}\mathcal{O}\to{\mathbb I} \\
  \iota(\omega):=\displaystyle
  \sum_{j=1}^{m}\sum_{\begin{subarray}{c}
      x\in\fA; \\
      x\prec \omega_j.\\
    \end{subarray}}\psi(\omega_{\leq j-1}|x).
\end{cases}
\end{equation}
is an enumerator for the set $\mathcal{O}$ and given any two elements
$\omega,\omega'\in\mathcal{O}$ we have $\iota(\omega)<\iota(\omega')$ if
and only if $\omega$ precedes $\omega'$ in the lexicographic ordering
of $\fA^m$ induced by $\prec$. See~\cite{Cover} where such a function was first introduced and also~\cite{IL16} for the details of its injectivity.
The inverse of the function $\iota$ can be computed as described  in
Table~\ref{tt}; see~\cite[Theorem 3.1]{IL16} for the details of the proof.
\begin{table}[h]
\caption{Inverse of $\iota$}
\begin{algorithmic}
\Require $i\in{\mathbb I}$
\State $i_1\gets 1$
\State $\omega\gets $ []
\For {$k=1,\ldots,m$}
\State
$M\gets\{y\in\fA\colon
 \psi(\omega_{\leq k-1}|y)>0 \text{ and } \sum_{x\prec y}\psi(\omega_{\leq k-1}|x)\leq i_k
  \}$
\State
 $\omega\gets \omega|(\max M)$
\State $i_{k+1}\gets i_{k}-\sum_{x\prec \omega_k}\psi(\omega_{\leq k-1}|x)$;
\EndFor
\State \Return $\omega$
\end{algorithmic}
\label{tt}
\end{table}


\subsection{Notation}
In this section we shall apply to the case of line Hermitian Grassmannians, what we introduced in general in Section~\ref{sec2.1}.
Before defining the analogue of the function $\psi$ (see Equation~\eqref{psi}) we need to recall
a few more notions and explain what we mean by representation of a given line.

Let $V=V(m,q^2)$ be a vector space of dimension $m$ over $\FF_{q^2}$ and let $B$ be a given basis of $V$. We will always write the coordinates of vectors with respect to $B$.

Up to projectivities, there is exactly one class of non-degenerate Hermitian forms on $V.$
So, without loss of generality,  we can take for $m$ odd the form $\eta$ to be
\begin{equation}\label{eta}
\eta_m(X,Y):= x_1^qy_1+x_2^qy_3+x_3^qy_2+\cdots+x_{m-1}^qy_{m}+x_{m}^qy_{m-1}
\end{equation}
  where $X=(x_i)_{i=1}^m,$ and $Y=(y_i)_{i=1}^m.$
   Accordingly, the points $X$ of the Hermitian polar space $\cH_m$ defined by $\eta_m$ satisfy the following
  equation
  \[ x_1^{q+1}+\Tr(\sum_{i=1}^{(m-1)/2}x_{2i}^qx_{2i+1})=0, \]
  where $\Tr(x):=x+x^q$ is the trace function from $\FF_{q^2}$ to $\FF_q$.

  For $m$ even, we will regard as the vector space $V_m$
  as
  the $m$-dimensional vector subspace of $V_{m+1}:=V(m+1,q^2)$ of equation $x_1=0$ and the Hermitian form defined on $V_m$ will be the restriction of the form $\eta_{m+1}$ defined above for the case of odd dimension
  to $V_m\times V_m$, i.e.   $\eta_m(X,Y):=\eta_{m+1}|_{V_m\times V_m}(X,Y)$.
Accordingly, if $m$ is even, the polar space $\cH_m$ has as points and lines, the points and lines of $\cH_{m+1}$ which are fully contained in the hyperplane of equation $x_1=0$. Hence, $\cH_n^{even}=\cH_n^{odd}\cap V_{m}$.

We shall denote by $\mu_m$, respectively $N_m$, the number of points, respectively the number of lines, of $\cH_m\subseteq \PG(m-1,q^2)$ (independently from the parity of $m$). It is well known that
\begin{equation}\label{pts and lines}
\mu_m:=\frac{(q^m+(-1)^{m-1})(q^{m-1}- (-1)^{m-1})}{(q^2-1)}\,\, \textrm{and}\,\, N_m:=\frac{\mu_m\mu_{m-2}}{q^2+1}.
\end{equation}

Recall that a $(2\times t)$-matrix $G$ is
said to be in \emph{Hermite normal form} or in \emph{row reduced
echelon form} (RREF, in brief) if it is in row-echelon form, the leading
non-zero entry of each row is $1$ and all entries above a leading entry
are $0$.

The points of $\cH_m$ will be represented
as vectors normalized on the left, i.e. whose first non-zero entry is $1$.
For them the \emph{alphabet} consists of the elements of $\FF_{q^2}$ where
$\prec$ is an arbitrary total order relation defined on it
satisfying the condition:  $\forall y\in\FF_{q^2}\setminus\{0\}: 0\prec y$.

The elements of a line Hermitian Grassmannian, i.e. the points of $\cH_{n,2}$,
are the totally singular lines $\ell\subseteq\PG(m-1,q^2)$ and each of them admits a unique representation
in \emph{row-reduced echelon form} (RREF), i.e. for each line $\ell$  of $\PG(V)$ regarded as a point of $\cH_{n,2}$,
there are two uniquely determined vectors $X,Y\in\FF_{q^2}^{m}$ such that $\ell=\langle X,Y\rangle$
and $G_{\ell}:=\begin{ppmatrix} X\\Y\end{ppmatrix}$ is a
$2\times m$-matrix in RREF. We call $G_{\ell}$ the \emph{representation} of $\ell$.

With a slight abuse of notation we shall say that a point (resp. a line) has prefix $\alpha$ if its
representation with respect to the basis $B$ has prefix $\alpha$. Clearly, when $m$ is even
we can identify the points of the polar space $\cH_{m}$ with those of $\cH_{m+1}$ having prefix $0$, while
the lines of $\cH_{m}$ correspond to the lines of $\cH_{m+1}$ with prefix $\begin{ppmatrix} 0\\0\end{ppmatrix}$.

Recall that, in the case of lines, the alphabet $\fA$  consists of all
column vectors in $\FF_{q^2}^2$ of the form $\begin{ppmatrix} \alpha \\ \beta \end{ppmatrix}$
with $\alpha,\beta\in\FF_{q^2}$. With a slight abuse of notation, we shall denote the order induced lexicographically on $\FF_{q^2}^2$ by
$\prec$ with the same symbol $\prec$, so that
\[ \begin{ppmatrix}\alpha\\ \beta\end{ppmatrix}\prec\begin{ppmatrix}\gamma\\ \delta\end{ppmatrix}
  \Leftrightarrow
  (\alpha\prec\gamma) \mbox{ or } ( (\alpha=\gamma) \text{ and } \beta\prec\delta). \]

Under the assumptions we made above, since $\cH_n^{even}=\cH_n^{odd}\cap V_{m}$, for all points $p\in\cH_n^{even}$
and $p'\in\cH_n^{odd}\setminus\cH_n^{even}$ we have $\iota(p)<\iota(p')$.
Likewise, for any two lines $\ell\in\cH_{n,2}^{even}$ and $\ell'\in\cH_{n,2}^{odd}\setminus\cH_{n,2}^{even}$ (where $\ell=\ell'\cap V_m$)
we also have $\iota(\ell)<\iota(\ell')$.
This implies that, we can define point and line enumerators for
the polar spaces $\cH_{n}^{even}$ as the restriction of the corresponding point and line
enumerators for the polar spaces $\cH_n^{odd}$.
So, in the remaining part of the paper, we shall only explicitly deal with enumerators for
$\cH_n^{odd}$, i.e. assume $m$ to be odd.

\section{Point enumerators}\label{sec3}
 For any $t$-uple $D_t=(d_i)_{i=1}^t$, $t\leq m$, denote by $\theta(D_t)$ the number of points
  of $\cH_m$ having $D_t$ as prefix, i.e.
  \[\theta(D_t):=|\{[D_t|X_{m-t}]\colon [D_t|X_{m-t}] \textrm{ is a point of }\cH_m \}|.\]
In this section we will explain how to explicitly compute the point counting function $\theta.$

We suppose $m$ is odd.
\begin{itemize}
\item if $D_t$ is not normalized on the left, then
  return $0$;
  \item if $t$ is odd, there are two possible subcases:
    \begin{enumerate}
    \item \framebox{$D_t=\mathbf{0}$}. Then the number of points whose representation begins with $D_t$ is
      exactly the same as the number of points of a Hermitian
      variety $\cH_{m-t}$; so return
      \[ \boxed{\theta(D_t)=\mu_{m-t}}. \]
    \item \framebox{$D_t\neq\mathbf{0}$}.
	    Let $s=(q^2-1)\mu_{m-t}+1$ be the number
      of vectors contained in a non-degenerate Hermitian variety in $\PG(V_{m-t})$ and
      \begin{enumerate}
      \item if $\eta_{t}(D_t,D_t)=0$, then return $s$;
      \item if $\eta_{t}(D_t,D_t)\neq 0$, then let $X\in V(m-t,q^2)$ such that $\eta_{m-t}(X,X)\not=0.$ Then there are $q+1$ values of $\lambda$ such that $\eta_{m-t}(\lambda X,\lambda X)=-\eta_{t}(D_t,D_t)$ since the equation $\lambda^{q+1}=\frac{-\eta_t(D_t,D_t)}{\eta_{m-t}(X,X)}$ has exactly $q+1$ solutions. In other words, for each point $X$ not in $\cH_{m-t}$ (by convention the points of $\cH_{m-t}$ are represented by vectors normalized on the left) there are $q+1$ vectors $Y\in V(m-t,q^2)$ such that $D_t|Y$ is a point of $\cH_m.$
 So, return
        $(q^{2(m-t)}-s)/(q-1)$.
      \end{enumerate}
In summary,
\[ \boxed{\theta(D_t)=\begin{cases}
		 s & \mbox{ if } \eta_t(D_t,D_t)=0 \\
		 (q^{2(m-t)}-s)/(q-1) & \mbox{ if } \eta_t(D_t,D_t)\neq0
 \end{cases}}. \]
    \end{enumerate}
  \item if $t$ is even, there are three possible subcases:
    \begin{enumerate}
    \item \framebox{$D_t=\mathbf{0}$}. In this case we just return the number of points
      whose  $t+1$ entry is $0$ plus the number of points whose $t+1$ entry is
      $1$ and use the arguments of the previous cases; so
      \[ \boxed{\theta(D_t)=\theta(D_t|0)+\theta(D_t|1)}. \]
    \item \framebox{$D_t\neq\mathbf{0}$ and $d_t=0$}. In this case we need to compute the number
      of vectors whose representation begins with $D_t$;
      observe that this number does not depend
      on the value of the  entry in position $t+1$, since we have
      \[ d_1^{q+1}+\Tr(d_2^qd_3+\cdots+ 0^qx_{t+1}+\cdots)=0 \]
      so, this is $q^2$ times the number of points with odd prefix $D_t|0$.
      So,
      \[ \boxed{\theta(D_t)=q^2\theta(D_t|0)}. \]
    \item
      \label{evDtn0}
      \framebox{$D_t\neq\mathbf{0}$ and $d_t\not=0$}. In this case we have to count the number of vector solutions of
      \[ d_1^{q+1}+\Tr(d_2^qd_3+\cdots d_{t-2}^qd_{t-1})+\Tr(d_t^qx_{t+1})+
        \Tr(x_{t+2}^qx_{t+3}+\cdots+x_{m-1}^qx_{m})=0. \]
      For any choice of $x_{t+2},\ldots,x_{m}$ there are $q$ possibilities
      for $x_{t+1}$ such that
      \[ \Tr(d_t^qx_{t+1})=-d_1^{q+1}-\Tr(d_2^qd_3+\cdots d_{t-2}^qd_{t-1})
        -\Tr(x_{t+2}^qx_{t+3}+\cdots+x_{m-1}^qx_{m}).\]
      So, in this case, we return
      \[ \boxed{\theta(D_t)=q\cdot q^{2(m-t-1)}}. \]
    \end{enumerate}
  \end{itemize}
  It is straightforward to see that to compute the function $\theta(D_t)$ where
  $D_t$ is a prefix of length $t\leq m$ requires at worst to evaluate
  $\eta_t(D_t,D_t)$, that
  is to perform $t$ products and $t$ conjugations. So,
  \begin{lemma}   \label{complexity theta}
  The computational complexity
  of the function $\theta$ is  $O(t)\leq O(m)$.
  \end{lemma}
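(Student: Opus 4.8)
The plan is to verify the cost bound directly from the case analysis defining $\theta$, showing that in every branch the work is dominated by a single evaluation of the Hermitian form $\eta_t(D_t,D_t)$, which costs $O(t)$ field operations. First I would observe that the initial normalization test — deciding whether $D_t$ is normalized on the left — amounts to a single left-to-right scan of the $t$ entries of $D_t$, hence $O(t)$, and that deciding whether $D_t=\mathbf{0}$ is likewise an $O(t)$ scan.

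Next I would treat the case $t$ odd. When $D_t=\mathbf{0}$ the routine returns the closed-form value $\mu_{m-t}$; when $D_t\neq\mathbf{0}$ it evaluates $\eta_t(D_t,D_t)$ and, according to whether this value vanishes, returns either $s=(q^2-1)\mu_{m-t}+1$ or $(q^{2(m-t)}-s)/(q-1)$. Treating the evaluation of the closed-form quantities $\mu_{m-t}$ and $s$ as unit cost (each depends only on the single integer $m-t$), the only nontrivial contribution is the computation of $\eta_t(D_t,D_t)$, which by definition requires exactly $t$ products and $t$ conjugations, i.e. $O(t)$ operations.

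The key point, and the one step that needs care, is the case $t$ even, where $\theta$ is defined recursively and one must check that the recursion does not blow up. Here I would note that each of the three even subcases reduces to the odd case on a prefix of length $t+1$: the branch $D_t=\mathbf{0}$ returns $\theta(D_t|0)+\theta(D_t|1)$, the branch $D_t\neq\mathbf{0},\ d_t=0$ returns $q^2\,\theta(D_t|0)$, and the branch $D_t\neq\mathbf{0},\ d_t\neq 0$ returns the closed-form value $q\cdot q^{2(m-t-1)}$. Since $t+1$ is odd, the recursive calls invoke only the \emph{non-recursive} odd case; hence the recursion depth is bounded by one, and at most two odd evaluations are triggered, each costing $O(t+1)=O(t)$ by the previous paragraph.

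Finally I would combine the estimates: in every branch the total cost is $O(t)$, dominated by the evaluation of $\eta$ on a prefix of length $t$ or $t+1$ together with the $O(t)$ normalization scan. Since $t\leq m$, this yields the claimed bound $O(t)\leq O(m)$ and completes the proof.
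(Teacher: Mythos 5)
Your proof is correct and follows the same route as the paper, which simply asserts that computing $\theta(D_t)$ requires at worst one evaluation of $\eta_t(D_t,D_t)$, i.e.\ $t$ products and $t$ conjugations. Your additional observation that the even-prefix cases only ever call the non-recursive odd-prefix cases (so the recursion depth is bounded by one) is exactly the detail the paper leaves implicit as ``straightforward,'' and it is verified correctly.
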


\section{Line Enumerators}\label{sec4}
Let $\ell$ be a line of $\cH_m$ with prefix $D_t$ of length $t$ ($\leq m$). More explicitly, put $D_t:=\begin{ppmatrix} A_t\\ B_t\end{ppmatrix}$ where $A_t:=(a_i)_{i=1}^t$, $B_t:=(b_i)_{i=1}^t$,
and define $\hat{A}:=(A_t, x_{t+1},\dots, x_m)$ and $\hat{B}:=(B_t, y_{t+1},\dots, y_m)$ so that $\ell=\langle  \hat{A},\hat{B}\rangle.$ Suppose $D_t$ is in RREF form and denote by $\psi(D_t)$ the number of lines in $\cH_m$ whose
  representation in RREF begins with $D_t$. Define  $\psi^E(D_t)$ and $\psi^O(D_t)$ as follows:
  \[ \psi(D_t)=:\begin{cases}
      \psi^E(D_t) & \text{ if $t$ is even } \\
      \psi^O(D_t) & \text{ if $t$ is odd.}
      \end{cases} \]
In this section we will compute $\psi(D_t)$ with some recursive formulas. To this aim, we need to determine the number of solutions of the
    following system of equations:
    \begin{equation}\label{system}
    \begin{cases}
      \eta_m(\hat{A},\hat{A})=0\\
      \eta_m(\hat{B},\hat{B})=0\\
      \eta_m(\hat{A},\hat{B})=0\\
     \end{cases}.
   \end{equation}
We shall distinguish two cases, depending on the parity of $t$.
\subsection{Even prefix $t$}\label{t even}
It $t=0$ then $\psi(D_0)=\psi^E(\emptyset)$ is clearly the number of lines  of $\cH_m$.
If $t>0$ system~\eqref{system} can be written as
  \begin{equation}
    \label{teven}
    \begin{cases}
      a_1^{q+1}+\Tr(a_2^qa_3+\cdots+a_{t-2}^qa_{t-1})+\Tr(a_t^qx_{t+1})+\\
      \qquad\qquad\qquad\qquad\qquad\qquad+\Tr(x_{t+2}^qx_{t+3}+\cdots+
      x_{m-1}^qx_{m})=0 \\
      b_1^{q+1}+\Tr(b_2^qb_3+\cdots+b_{t-2}^qb_{t-1})+\Tr(b_t^qy_{t+1})+ \\
      \qquad\qquad\qquad\qquad\qquad\qquad
      +\Tr(y_{t+2}^qy_{t+3}+\cdots+
      y_{m-1}^qy_{m})=0 \\
      a_1^qb_1+a_2^qb_3+a_3^qb_2+ \cdots +a_{t-2}^qb_{t-1}+a_{t-1}^qb_{t-2} +a_t^qy_{t+1}+ x_{t+1}^qb_{t}+\\
      \qquad\qquad x_{t+2}^qy_{t+3}+x_{t+3}^qy_{t+2}
      +\cdots+ x_{m-1}^qy_{m}+x_{m}^qy_{m-1}=0
    \end{cases}
  \end{equation}
or, equivalently, as

  \begin{equation}
    \label{tevenp}
    \begin{cases}
      \eta_m(A_t|0^{m-t},A_t|0^{m-t})+\Tr(a_t^qx_{t+1})+\Tr(x_{t+2}^qx_{t+3}+\cdots+
      x_{m-1}^qx_{m})=0 \\
      \eta_m(B_t|0^{m-t},B_t|0^{m-t})+\Tr(b_t^qy_{t+1})+\Tr(y_{t+2}^qy_{t+3}+\cdots+
      y_{m-1}^qy_{m})=0 \\
      \eta_m(A_t|0^{m-t},B_t|0^{m-t})+a_t^qy_{t+1}+x_{t+1}^qb_{t}+x_{t+2}^qy_{t+3}+x_{t+3}^qy_{t+2}+\\
      \qquad\qquad\qquad\qquad\qquad\qquad\qquad\qquad
      +\cdots+ x_{m-1}^qy_{m}+x_{m}^qy_{m-1}=0.
    \end{cases}
  \end{equation}

We can always suppose that either $a_t=0$ or $b_t=0$;
    otherwise, if for example $b_t\not=0,$ replace $D_t$ by the prefix
    $\begin{ppmatrix} A_t-\frac{a_t}{b_t}B_t \\ B_t \end{ppmatrix}$ in~\eqref{tevenp}.
    The following cases need to be considered.
    \begin{enumerate}[(E1)]
    \item\label{E0} \framebox{$t=0$}. All lines in $\cH_m$
      have prefix $\emptyset$; thus,
      \[ \boxed{\psi^E(\emptyset)=N_m}. \]
    \item\label{E1} \framebox{$A_t=\mathbf{0}=B_t$}.
      The lines with prefix $D_t$
      correspond to the lines contained in the degenerate Hermitian variety embedded in a vector space of dimension $m-t$
      satisfying the following equations
            \[ (x_{t}^qx_{t+1}+x_{t+1}^qx_{t}+x_{t+2}^qx_{t+3}+x_{t+3}^qx_{t+2}+\cdots+x_{m-1}^qx_{m}=0) \wedge (x_t=0). \]
      Hence
      \[ \boxed{\psi^E(D_t):=\mu_{m-t-1}+q^4N_{m-t-1}}. \]
    \item\label{E2} \framebox{$a_t\neq 0$ and $B_t=\mathbf{0}$}.
      For any choice of $X_{m-t}$ such that $\eta_m(A_t|X_{m-t},A_t|X_{m-t})=0$,
      there are $\mu_{m-t-1}$ points $Y_{m-t}$ such that
      $\langle A_t|X_{m-t}, 0_t|Y_{m-t}\rangle$ is a line of $\cH_m$.
      Since
      $\begin{pmatrix} A_t & X_{m-t} \\ 0_t & Y_{m-t}\end{pmatrix}$ and
      $\begin{pmatrix} A_t & X_{m-t}-\alpha Y_{m-t} \\ 0_t & Y_{m-t} \end{pmatrix}$
      represent the same line for all $\alpha\in\FF_{q^2}$,
      any line is represented $q^2$ distinct times.
      So, we get
      \[ \psi^E(D_t)=\frac{1}{q^2}\mu_{m-t-1}\theta(A_t). \]
      Using the value provided by point~\ref{evDtn0} of case even in Section~\ref{sec3} for $\theta(A_t)$,
      this gives
      \[ \boxed{\psi^E(D_t):=q^{2m-2t-3}\mu_{m-t-1}}. \]

    \item\label{E3} \framebox{$a_t=0$ and  $b_t\not= 0$}. For any fixed vector $Y=(B_t|y_{t+1}|Y')$ with prefix $B_t$ such that  $\eta_m(B_t|y_{t+1}|Y',B_t|y_{t+1}|Y')=0$
       and any choice of $X'=(x_{t+2},\ldots,x_{m})$ such that the vector
      $(A_t|0|X')$ satisfies $\eta_m(A_t|0|X',A_t|0|X')=0$,
      the third equation of System~\eqref{tevenp} uniquely determines $x_{t+1}$. Hence for any value of $(x_{t+2},\cdots, x_m)$ and $(y_{t+1},\cdots, y_m)$
 satisfying the first and the second equation of System~\eqref{tevenp}, the third equation provides only one solution of $x_{t+1}$; so the number of solutions of the system can be computed by only considering the first two equations.
 The number of solutions of the second equation is $\theta(B_t)$, where we recall from Section~\ref{sec3} that $\theta(B_t)$ is the number of points of $\cH_m$ having $B_t$ as prefix. As for the first equation, since $a_t=0$, we have
      $\theta(A_t|0)=\theta(A_t|\alpha)$ for any $\alpha\in\FF_{q^2}$. Hence, the number of solutions of the first equation is $\theta(A_t|0).$
        Since
\[\theta(A_t)=\sum_{\alpha\in\FF_{q^2}}\theta(A_t|\alpha)=\sum_{\alpha\in\FF_{q^2}}\theta(A_t|0)=q^2\theta(A_t|0), \]
      we have $\theta(A_t|0)=\frac{1}{q^2}\theta(A_t)$.
      It follows that $\psi^E(D_t)=\theta(A_t|0)\theta(B_t)$; hence
      \[ \boxed{\psi^E(D_t):=\frac{1}{q^2}\theta(A_t)\theta(B_t)}. \]
    \item\label{E4} \framebox{$a_t\not=0$ and $B_t\neq\mathbf{0}$}.
      Since $a_t\not= 0$ we necessarily have $b_t=0$. This case is analogous to the previous one so
        \[ \boxed{\psi^E(D_t)=\frac{1}{q^2}\theta(A_t)\theta(B_t)}. \]
    \item\label{E5} \framebox{$a_t=b_t=0$ and $A_t\neq\mathbf{0}\neq B_t$}.
      By System~\eqref{teven}, the set of lines of $\cH_m$ with (even) prefix $D_t=\begin{ppmatrix}A_t\\
      B_t\end{ppmatrix}$ is the same as the disjoint union of the sets of lines with (odd) prefix $D_{t+1}'=\begin{ppmatrix}A_t&|&\alpha\\
      B_t&|&\beta\end{ppmatrix}$ as $\alpha$ and $\beta$ vary in $\FF_{q^2}.$ Hence
      \[ \psi^E(D_t)=\sum_{\alpha,\beta\in\FF_{q^2}}\psi^O\left(
          \begin{ppmatrix} A_t&|&\alpha \\ B_t&|&\beta \end{ppmatrix}\right). \]
      Observe that
      \[ \psi^O\left(
          \begin{ppmatrix} A_t&|&\alpha \\ B_t&|&\beta \end{ppmatrix}\right)=
        \psi^O\left(
          \begin{ppmatrix} A_t&|&0 \\ B_t&|&0 \end{ppmatrix}\right) \]
      for any $\alpha,\beta\in\FF_{q^2}$. So,
      \[ \boxed{\psi^E(D_t):=q^4\psi^O\left(  \begin{ppmatrix} A_t&|&0 \\ B_t&|&0 \end{ppmatrix}\right)}. \]
      The function $\psi^O$ shall be analyzed in the next section.
    \item\label{E6} \framebox{$a_t=b_t=0$ and $A_t\not=\mathbf{0}=B_t$}.
      In this case $\eta_{t+1}(A_t|\alpha,A_t|\alpha)=\eta_{t+1}(A_t|0,A_t|0)$
      and $\eta_{t+1}(A_t|\alpha,\mathbf{0}|0)=0$ for all $\alpha\in\FF_{q^2}$.
      So, in particular,
     \[ \psi^O\left(
          \begin{ppmatrix} A_t&|&\alpha \\ \mathbf{0}&|&0 \end{ppmatrix}\right)=
        \psi^O\left(
          \begin{ppmatrix} A_t&|&0 \\ \mathbf{0}&|&0 \end{ppmatrix}\right) \]
      for any $\alpha\in\FF_{q^2}$. Thus
      \[ \boxed{ \psi^E(D_t):=}
        \sum_{\alpha\in\FF_{q^2}}\psi^O\left(\begin{ppmatrix}A_t&|&\alpha \\
            \mathbf{0}&|&0
            \end{ppmatrix}\right)+\psi^O\left(\begin{ppmatrix}A_t&|&0\\ \mathbf{0}&|&1
            \end{ppmatrix}\right)=\]
        \[=\boxed{q^2\psi^O\left(\begin{ppmatrix}A_t&|&0 \\\mathbf{0}&|&0
            \end{ppmatrix}\right)+\psi^O\left(\begin{ppmatrix}A_t&|&0\\ \mathbf{0}&|&1
            \end{ppmatrix}\right)}. \]
    \end{enumerate}
    \subsection{Odd prefix $t$}\label{t odd}
    In this section we shall examine the case in which $t$ is odd.
    System~\eqref{system} can be written as
    \begin{equation} \label{todd}
    \begin{cases}
      a_1^{q+1}+\Tr(a_2^qa_3+\cdots+a_{t-1}^qa_{t}) +\Tr(x_{t+1}^qx_{t+2}+\cdots+
      x_{m-1}^qx_{m})=0 \\
      b_1^{q+1}+\Tr(b_2^qb_3+\cdots+b_{t-1}^qb_{t})+\Tr(y_{t+1}^qy_{t+2}+\cdots+
      y_{m-1}^qy_{m})=0 \\
      a_1^qb_1+a_2^qb_3+a_3^qb_2+\cdots+a_{t-1}^qb_{t}+a_{t}^qb_{t-1}+\\
      \qquad\qquad\qquad +x_{t+1}^qy_{t+2}+x_{t+2}^qy_{t+1}
      +\cdots+ x_{m-1}^qy_{m}+x_{m}^qy_{m-1}=0
    \end{cases}
  \end{equation}
 or, equivalently, as
  \begin{equation}
    \label{toddp}
    \begin{cases}
      \eta_t(A_t,A_t)+\Tr(x_{t+1}^qx_{t+2}+\cdots+
      x_{m-1}^qx_{m})=0 \\
      \eta_t(B_t,B_t)+\Tr(y_{t+1}^qy_{t+2}+\cdots+
      y_{m-1}^qy_{m})=0 \\
      \eta_t(A_t,B_t)+x_{t+1}^qy_{t+2}+x_{t+2}^qy_{t+1}
      +\cdots+ x_{m-1}^qy_{m}+x_{m}^qy_{m-1}=0.
    \end{cases}
  \end{equation}
The following cases need to be considered.
   \begin{enumerate}[(O1)]
   \item\label{O1} \framebox{$t=m$} Then either $D_m=\begin{ppmatrix} A_m\\ B_m\end{ppmatrix}$ represents a line $\ell=\langle A_m,B_m\rangle$ of $\cH_m$
        or not; so we have
        \[ \boxed{\psi^O(D_m):=\begin{cases}
            1 & \text{if } \eta_m(A_m,B_m)=\eta_m(A_m,A_m)=\eta_m(B_m,B_m)=0 \\
            0 & \text{otherwise}
            \end{cases}}. \]
      \item\label{O2} \framebox{$A_t=\mathbf{0}=B_t$}. The lines with prefix $D_t$ correspond to the lines contained in a non-degenerate Hermitian variety
        embedded in a vector space of dimension $m-t$ satisfying the following equation
            \[x_{t+1}^qx_{t+2}+x_{t+2}^qx_{t+1}+\cdots+x_{m-1}^qx_{m}=0.\] Hence
        \[ \boxed{\psi^O(D_t):=N_{m-t}}. \]
        \item\label{O3} \framebox{$A_t\not=\mathbf{0}$ and $B_t=\mathbf{0}$}
Then
      \begin{multline*} \psi^O(D_t):=\sum_{\alpha\neq0}\psi^E\left(\begin{ppmatrix}A_t&|&\alpha\\
              \mathbf{0}&|&0\end{ppmatrix}\right)+
          \psi^E\left(\begin{ppmatrix}A_t&|&0\\
                \mathbf{0}&|&1\end{ppmatrix}\right)+ \\
          +\sum_{\alpha}\psi^O\left(\begin{ppmatrix}A_t&|&0&|&\alpha\\
                B_t&|&0&|&0\end{ppmatrix}\right)+
            \psi^O\left(\begin{ppmatrix}A_t&|&0&|&0\\
                B_t&|&0&|&1\end{ppmatrix}\right).
            \end{multline*}
            \begin{enumerate}
          \item Observe that
            \[ \psi^E\left(\begin{ppmatrix}A_t&|&\alpha\\
                  \mathbf{0}&|&0\end{ppmatrix}\right)=
              \psi^E\left(\begin{ppmatrix}A_t&|&1\\
                  \mathbf{0}&|&0\end{ppmatrix}\right) \]
            for any $\alpha\neq 0$; so, the first contribution is
            \[\sum_{\alpha\neq0}\psi^E\left(\begin{ppmatrix}A_t&|&\alpha\\
                  \mathbf{0}&|&0\end{ppmatrix}\right)=(q^2-1)
              \psi^E\left(\begin{ppmatrix}A_t&|&1\\
                  \mathbf{0}&|&0\end{ppmatrix}\right) \]
          \item Also
            \[ \psi^O\left(\begin{ppmatrix}A_t&|&0&|&\alpha\\
                  \mathbf{0}&|&0&|&0\end{ppmatrix}\right)=
              \psi^O\left(\begin{ppmatrix}A_t&|&0&|&0\\
                  \mathbf{0}&|&0&|&0\end{ppmatrix}\right) \]
            for any $\alpha$; so the third contribution is
            \[ \sum_{\alpha}\psi^O\left(\begin{ppmatrix}
                  A_t&|&0&|&\alpha \\ B_t&|&0&|&0\end{ppmatrix}\right)=q^2\psi^O\left(\begin{ppmatrix}A_t&|&0&|&0\\
                  \mathbf{0}&|&0&|&0\end{ppmatrix}\right). \]
          \end{enumerate}
          It follows that
          \[ \boxed{\begin{array}{ll}\psi^O(D_t):=&(q^2-1)
              \psi^E\left(\begin{ppmatrix}A_t&|&1\\
                  \mathbf{0}&|&0\end{ppmatrix}\right)+ \psi^E\left(\begin{ppmatrix}A_t&|&0\\
                \mathbf{0}&|&1\end{ppmatrix}\right)+ \\&
              \qquad+q^2\psi^O\left(\begin{ppmatrix}A_t&|&0&|&0\\
                  \mathbf{0}&|&0&|&0\end{ppmatrix}\right)
                +\psi^O\left(\begin{ppmatrix}A_t&|&0&|&0\\
                    B_t&|&0&|&1\end{ppmatrix}\right).
                          \end{array}}\]

      \item\label{O4} \framebox{$B_t\neq\mathbf{0}$}  Then
        \begin{multline*} \psi^O(D_t):=\sum_{\alpha\neq0\neq\beta}\psi^E\left(\begin{ppmatrix}A_t&|&\alpha\\
              B_t&|&\beta\end{ppmatrix}\right)+
          \sum_{\alpha\neq0}\psi^E\left(\begin{ppmatrix}A_t&|&\alpha\\
                B_t&|&0\end{ppmatrix}\right)+\\
         +\sum_{\beta\neq0}\psi^E\left(\begin{ppmatrix}A_t&|&0\\
                B_t&|&\beta\end{ppmatrix}\right)+
            \psi^E\left(\begin{ppmatrix}A_t&|&0\\
                B_t&|&0\end{ppmatrix}\right).
          \end{multline*}
          In order to determine the various contributions,
          for $A_t,B_t\in V(t,q^2)$, define
          \[ \zeta(A_t,B_t):=\sum_{\alpha\neq0\neq\beta}\psi^E\left(\begin{ppmatrix}A_t&|&\alpha\\
                  B_t&|&\beta\end{ppmatrix}\right). \]
            \begin{lemma}
              \label{lemma3}
            \[ \zeta(A_t,B_t)=(q^2-1)\theta(B_t|1)[\xi(A_t,B_t)s+
              (q^2-1-\xi(A_t,B_t))\frac{q^{2m-2t-4}-s}{q-1}] \]
            where $\xi(A_t,B_t)$ is the number of values $\lambda\in\FF_{q^2}$
            with $\lambda\neq 0$ such that
            \[ \eta_m(A_t-\lambda B_t|0^{m-t},A_t-\lambda B_t|0^{m-t})=0 \]
            and $s=(q^2-1)\mu_{m-t-2}+1$.
          \end{lemma}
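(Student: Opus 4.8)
The plan is to evaluate each summand $\psi^E\!\left(\begin{ppmatrix}A_t&|&\alpha\\ B_t&|&\beta\end{ppmatrix}\right)$ with $\alpha\neq0\neq\beta$ and then collect the terms. Since both last entries are nonzero, I would first apply the reduction introduced at the start of Section~\ref{t even}: subtracting $\frac{\alpha}{\beta}$ times the second row from the first turns the $(t{+}1)$-st entry of the first row into $0$ and leaves $\psi^E$ unchanged, because the induced substitution $\hat A\mapsto\hat A-\frac{\alpha}{\beta}\hat B$ is an invertible change of basis of the line and carries System~\eqref{system} bijectively onto itself. After this reduction the first row ends in $0$ and the second row ends in $\beta\neq0$, so we are exactly in case~(\ref{E3}), giving
\[ \psi^E\!\left(\begin{ppmatrix}A_t&|&\alpha\\ B_t&|&\beta\end{ppmatrix}\right)=\frac{1}{q^2}\,\theta\!\big((A_t-\tfrac{\alpha}{\beta}B_t)|0\big)\,\theta(B_t|\beta). \]
Here $A_t-\frac{\alpha}{\beta}B_t$ is still normalized on the left, since its first nonzero entry is the leading $1$ of $A_t$: in RREF the second row vanishes in that column and both rows vanish to its left.

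Next I would simplify the double sum. By the even-prefix rule for $\theta$ (the subcase $D\neq\mathbf 0$, $d_t\neq0$ of Section~\ref{sec3}) the value $\theta(B_t|\beta)$ is independent of $\beta\in\FF_{q^2}\setminus\{0\}$, hence equals $\theta(B_t|1)$ and may be pulled out of the sum. Reindexing the remaining sum by $\lambda=\alpha/\beta$, and using that each $\lambda\neq0$ arises from exactly $q^2-1$ pairs $(\alpha,\beta)\in(\FF_{q^2}\setminus\{0\})^2$, I obtain
\[ \zeta(A_t,B_t)=\frac{q^2-1}{q^2}\,\theta(B_t|1)\sum_{\lambda\neq0}\theta\!\big((A_t-\lambda B_t)|0\big). \]

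It then remains to evaluate $\theta\big((A_t-\lambda B_t)|0\big)$. Writing $C:=A_t-\lambda B_t$, the prefix $C|0$ has even length $t+1$, ends in $0$, and has nonzero prefix, so the even-prefix rule gives $\theta(C|0)=q^2\,\theta(C|0|0)$; applying the odd-prefix rule to the length-$(t{+}2)$ prefix $C|0|0$ yields $\theta(C|0|0)=s$ when $\eta_{t+2}(C|0|0,C|0|0)=0$ and $(q^{2m-2t-4}-s)/(q-1)$ otherwise, with $s=(q^2-1)\mu_{m-t-2}+1$. Since padding by zeros does not change the value of the form, $\eta_{t+2}(C|0|0,C|0|0)=\eta_t(C,C)=\eta_m(A_t-\lambda B_t|0^{m-t},A_t-\lambda B_t|0^{m-t})$, so the dichotomy is governed precisely by the condition defining $\xi(A_t,B_t)$. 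Thus among the $q^2-1$ nonzero values of $\lambda$, exactly $\xi(A_t,B_t)$ contribute $q^2 s$ and the remaining $q^2-1-\xi(A_t,B_t)$ contribute $q^2(q^{2m-2t-4}-s)/(q-1)$; substituting into the displayed expression for $\zeta$ and cancelling the factor $q^2$ gives the claimed formula.

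The routine parts are the two successive descents of $\theta$ and the reindexing. The steps requiring care are the legitimacy of the first reduction (that it preserves the line count $\psi^E$ and keeps the first row normalized on the left) and the correct identification of $\eta_t(A_t-\lambda B_t,A_t-\lambda B_t)$ with the form appearing in the definition of $\xi$. I expect this bookkeeping—tracking which coordinate pairs of $\eta_m$ survive after the zero padding, and confirming the constancy of $\theta(B_t|\beta)$ in $\beta$—to be the only genuine obstacle.
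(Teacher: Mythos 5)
Your proof is correct and follows essentially the same route as the paper's: row-reduce to annihilate the $(t+1)$-st entry of the first row (invariance of $\psi^E$ under $\hat A\mapsto \hat A-\tfrac{\alpha}{\beta}\hat B$), apply case (E\ref{E3}), reindex the sum by $\lambda=\alpha\beta^{-1}$ with multiplicity $q^2-1$, and evaluate $\theta\big((A_t-\lambda B_t)|0\big)$ via the even/odd prefix recursions, splitting the $q^2-1$ values of $\lambda$ according to $\xi(A_t,B_t)$. Your explicit remarks that $A_t-\lambda B_t$ stays normalized on the left and that $\theta(B_t|\beta)$ is constant in $\beta\neq 0$ are details the paper leaves implicit, but the argument is the same.
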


          \begin{proof}
Since
\[ \psi^E\left(\begin{ppmatrix} A_t &|& \alpha \\ B_t &|&\beta\end{ppmatrix}\right)=
  \psi^E\left(\begin{ppmatrix} A_t-\lambda_{\alpha,\beta} B_t &|& \alpha-\lambda_{\alpha,\beta}\beta \\ B_t &|&\beta\end{ppmatrix}\right) \]
for any choice of scalars $\lambda_{\alpha,\beta}\in\FF_{q^2}$,
\[ \zeta(A_t,B_t):=\sum_{\alpha\neq0\neq\beta}\psi^E\left(\begin{ppmatrix} A_t-\lambda_{\alpha,\beta} B_t &|& \alpha-\lambda_{\alpha,\beta} \beta \\
      B_t&|&\beta \end{ppmatrix}\right).
\]
Put $\lambda_{\alpha,\beta} :=\alpha\beta^{-1}$. Then,
\[ \zeta(A_t,B_t):=\sum_{\alpha\neq0\neq\beta}\psi^E\left(\begin{ppmatrix} A_t-\alpha\beta^{-1} B_t &|& 0 \\
      B_t&|&\beta \end{ppmatrix}\right).
\]
Since $\alpha$ is an arbitrary non-zero element, for each value of $\beta$, $\lambda_{\alpha,\beta}:=\alpha\beta^{-1}$ assumes all possible
non-zero values --- so, with a change of variable, we get
\[ \zeta(A_t,B_t):=\sum_{\lambda\neq0\neq\beta}\psi^E\left(\begin{ppmatrix} A_t-\lambda B_t &|& 0 \\
      B_t&|&\beta \end{ppmatrix}\right).
\]
Finally, observe that for $\beta\neq0$,
\[ \psi^E\left(\begin{ppmatrix} A_t-\lambda B_t &|& 0 \\
      B_t&|&\beta \end{ppmatrix}\right)=
   \psi^E\left(\begin{ppmatrix} A_t-\lambda B_t &|& 0 \\
       B_t&|&1 \end{ppmatrix}\right). \]
 Hence
\[ \zeta(A_t,B_t):=(q^2-1)\sum_{\lambda\neq0}\psi^E\left(\begin{ppmatrix} A_t-\lambda B_t &|& 0 \\
      B_t&|&1 \end{ppmatrix}\right).
\]




            Now, since the $(t+1)$-entry in the second row is non-zero, by case (E\ref{E3}) analyzed in Section~\ref{t even},
            \[ \psi^E\left(
                \begin{ppmatrix} A_t-\lambda B_t &|&0 \\
                  B_t      &|&1
                \end{ppmatrix}\right)=\frac{1}{q^2}\theta(A_t-\lambda B_t|0)\theta(B_t|1)
            \]
            and
            \[ \zeta(A_t,B_t)=\frac{q^2-1}{q^2}\theta(B_t|1)\sum_{\lambda\neq0}\theta(A_t-\lambda B_t|0). \]

            On the other hand, by Section~\ref{sec3} case 2 of even prefix,
            \[ \theta(A_t-\lambda B_t|0)=q^2\theta(A_t-\lambda B_t|0|0), \]
            so,
            \[ \frac{1}{q^2}\theta(A_t-\lambda B_t|0)=\begin{cases}
                s & \text{if } \eta_t(A_t-\lambda B_t,A_t-\lambda B_t)=0 \\
                \displaystyle\frac{(q^2)^{(m-t-2)}-s}{q-1} &
                \text{if } \eta_t(A_t-\lambda B_t,A_t-\lambda B_t)\neq 0,
              \end{cases}\]
            where $s:=(q^2-1)\mu_{m-t-2}+1$.
Since $\xi(A_t,B_t)$ is the number of values $\lambda\in\FF_{q^2}$ with $\lambda\neq0$   and $\eta_m(A_t-\lambda B_t|0^{m-t},A_t-\lambda B_t|0^{m-t})=0$, we have
\[\begin{array}{l}
    \zeta(A_t,B_t)=(q^2-1)\theta(B_t|1)(\xi(A_t,B_t)s+
              (q^2-1-\xi(A_t,B_t))\frac{q^{2m-2t-4}-s}{q-1})= \\
    \quad =(q^2-1)q^{2m-2t-3}(\xi(A_t,B_t)s+
              (q^2-1-\xi(A_t,B_t))\frac{q^{2m-2t-4}-s}{q-1}).
\end{array} \]
          \end{proof}
          We now compute $\xi(A_t,B_t)$.
          \begin{lemma}
            \label{lxi}
             We have
              \[ \xi(A_t,B_t):=\begin{cases}
                  q^2-1 & \text{if } \eta_t(A_t,A_t)=\eta_t(B_t,B_t)=\eta_t(A_t,B_t)=0 \\
                  q-1 & \text{if } \eta_t(A_t,A_t)=\eta_t(B_t,B_t)=0\neq\eta_t(A_t,B_t) \\
                  0 & \text{if } \eta_t(A_t,B_t)=0=\eta_t(A_t,A_t)\eta_t(B_t,B_t) \\
                  q+1 & \text{if } \eta_t(A_t,B_t)=0, \eta_t(A_t,A_t)\eta_t(B_t,B_t)\neq 0 \\
                  q & \text{if } \eta_t(A_t,B_t)\neq0, \eta_t(A_t,A_t)\eta_t(B_t,B_t)=0 \\
                  1 & \text{if Equation~\eqref{e0f} satisfied}\\
                  q+1 & \text{otherwise} \\
                \end{cases} \]
              where for $\eta_t(B_t,B_t)\neq0$, let
            \[ \delta:=\frac{\eta_t(A_t,B_t)}{\eta_t{(B_t,B_t)}}^q \]
            and consider
                \begin{equation}
                  \label{e0f}
               \delta^{q+1}\eta_t(B_t,B_t)-\delta\eta_t(A_t,B_t)-\delta^{q}\eta_t(B_t,A_t)+\eta_t(A_t,A_t)=0.
             \end{equation}
              \end{lemma}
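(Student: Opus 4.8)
The plan is to reduce the computation of $\xi(A_t,B_t)$ to counting the roots of a single univariate equation over $\FF_{q^2}$, and then to read off the six cases from the elementary behaviour of the norm and trace maps. Since the trailing zeros do not affect the form, $\eta_m(A_t-\lambda B_t|0^{m-t},A_t-\lambda B_t|0^{m-t})=\eta_t(A_t-\lambda B_t,A_t-\lambda B_t)$, and expanding by sesquilinearity gives
\[
 \eta_t(A_t-\lambda B_t,A_t-\lambda B_t)=\eta_t(B_t,B_t)\lambda^{q+1}-\eta_t(A_t,B_t)\lambda-\eta_t(B_t,A_t)\lambda^{q}+\eta_t(A_t,A_t).
\]
Writing $f(\lambda)$ for the right-hand side, we have $\xi(A_t,B_t)=|\{\lambda\neq0: f(\lambda)=0\}|$. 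Here I would record the two facts that drive everything: since $\eta_t$ is Hermitian, $\eta_t(A_t,A_t),\eta_t(B_t,B_t)\in\FF_q$ and $\eta_t(B_t,A_t)=\eta_t(A_t,B_t)^q$; and the norm map $\lambda\mapsto\lambda^{q+1}$ and trace map $\lambda\mapsto\lambda+\lambda^q=\Tr(\lambda)$ are, respectively, a surjection $\FF_{q^2}^\times\to\FF_q^\times$ with all fibres of size $q+1$, and an $\FF_q$-linear surjection $\FF_{q^2}\to\FF_q$ with all fibres of size $q$.

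Next I would split according to whether $\eta_t(B_t,B_t)=0$. If $\eta_t(B_t,B_t)=0$ then $f(\lambda)=\eta_t(A_t,A_t)-\Tr(\eta_t(A_t,B_t)\lambda)$, so $f(\lambda)=0$ is the affine trace equation $\Tr(\eta_t(A_t,B_t)\lambda)=\eta_t(A_t,A_t)$; when $\eta_t(A_t,B_t)=0$ this has either all or no solutions, and when $\eta_t(A_t,B_t)\neq0$ the map $\lambda\mapsto\Tr(\eta_t(A_t,B_t)\lambda)$ is surjective with fibres of size $q$, so there are exactly $q$ solutions in $\FF_{q^2}$. If instead $\eta_t(B_t,B_t)\neq0$, I would complete the expression to a norm: dividing by $\eta_t(B_t,B_t)$ and using $\delta=(\eta_t(A_t,B_t)/\eta_t(B_t,B_t))^q$ one obtains
\[
 f(\lambda)=\eta_t(B_t,B_t)\Bigl[(\lambda-\delta)^{q+1}+\tfrac{\Delta}{\eta_t(B_t,B_t)^2}\Bigr],\qquad \Delta:=\eta_t(A_t,A_t)\eta_t(B_t,B_t)-\eta_t(A_t,B_t)\eta_t(B_t,A_t),
\]
which rests on the factorisation $(\lambda-\delta)^{q+1}=(\lambda-\delta)(\lambda^q-\delta^q)$ together with $\delta^q=\eta_t(A_t,B_t)/\eta_t(B_t,B_t)$ and $\eta_t(B_t,B_t)\in\FF_q$. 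Thus $f(\lambda)=0$ becomes $(\lambda-\delta)^{q+1}=-\Delta/\eta_t(B_t,B_t)^2\in\FF_q$, whose number of solutions in $\FF_{q^2}$ is $1$ (namely $\lambda=\delta$) when $\Delta=0$ and $q+1$ when $\Delta\neq0$; one checks directly that $f(\delta)=\Delta/\eta_t(B_t,B_t)$, so the condition $\Delta=0$ is exactly Equation~\eqref{e0f}.

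Finally I would pass from the count of all roots to the count of nonzero roots, the only additional bookkeeping being whether $\lambda=0$ is a root, i.e.\ whether $f(0)=\eta_t(A_t,A_t)=0$; in the norm regime one also uses that $\delta=0\iff\eta_t(A_t,B_t)=0$. Reading the two regimes against the values of $\eta_t(A_t,A_t)$, $\eta_t(B_t,B_t)$, $\eta_t(A_t,B_t)$ then yields the six tabulated cases: the trace regime accounts for the rows with $\eta_t(B_t,B_t)=0$ (the all-zero row giving $\xi=q^2-1$, the row giving $\xi=q-1$, and the contributions $0$ and $q$), while the norm regime accounts for the rows with $\eta_t(B_t,B_t)\neq0$ (the values $0$, $q+1$, $q$, and the final split $1$/$q+1$ governed by Equation~\eqref{e0f}). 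I expect the only delicate point to be the correct handling of the overlaps between the listed cases --- especially distinguishing the cases with $\eta_t(A_t,B_t)=0$, where the unique root $\lambda=\delta$ may itself be zero (forcing $\xi=0$ rather than $1$), from Case~6 where $\delta\neq0$ yields $\xi=1$ --- together with consistently excluding the root $\lambda=0$ in each branch.
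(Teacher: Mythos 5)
Your proof is correct, but it takes a genuinely different route from the paper's. The paper argues geometrically: it reads $\xi(A_t,B_t)$ as the number of points of $\ell_{AB}\cap\cH_t$ other than $A_t,B_t$, where $\ell_{AB}=\langle A_t,B_t\rangle$, and invokes the classification of line sections of a Hermitian variety (totally isotropic lines, tangent lines, Baer sublines), reserving algebra only for the final case $\eta_t(A_t,B_t)\neq0\neq\eta_t(A_t,A_t)\eta_t(B_t,B_t)$, where tangency is detected by the multiple-root/derivative criterion applied to $\eta_t(A_t-\lambda B_t,A_t-\lambda B_t)=0$ --- that is how $\delta$ and Equation~\eqref{e0f} arise there. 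You instead count roots of the single polynomial $f(\lambda)=\eta_t(B_t,B_t)\lambda^{q+1}-\eta_t(A_t,B_t)\lambda-\eta_t(B_t,A_t)\lambda^q+\eta_t(A_t,A_t)$ purely arithmetically: when $\eta_t(B_t,B_t)=0$ it collapses to the affine trace equation $\Tr(\eta_t(A_t,B_t)\lambda)=\eta_t(A_t,A_t)$, with fibres of size $q$, and when $\eta_t(B_t,B_t)\neq0$ you complete it to a norm,
\[
f(\lambda)=\eta_t(B_t,B_t)\Bigl[(\lambda-\delta)^{q+1}+\Delta/\eta_t(B_t,B_t)^2\Bigr],\qquad
\Delta=\eta_t(A_t,A_t)\eta_t(B_t,B_t)-\eta_t(A_t,B_t)\eta_t(B_t,A_t),
\]
so the number of roots is $1$ or $q+1$ according as $\Delta=0$ or $\Delta\neq0$; since $f(\delta)=\Delta/\eta_t(B_t,B_t)$, the condition $\Delta=0$ is precisely Equation~\eqref{e0f}. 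Your bookkeeping for discarding $\lambda=0$ (via $f(0)=\eta_t(A_t,A_t)$ and $\delta=0\iff\eta_t(A_t,B_t)=0$) correctly reproduces all seven rows, including the rows with values $0$ and $q$, which are reachable from both regimes and consistently give the same count. What the paper's geometric argument buys is brevity and a structural explanation of the numbers $q-1$, $q$, $q+1$, $q^2-1$ as isotropic-line and Baer-subline counts; what yours buys is self-containedness (only the fibre sizes of the norm and trace maps are needed) and a uniform discriminant criterion that exhibits Equation~\eqref{e0f} as the vanishing of the Gram determinant of $\eta_t$ restricted to $\langle A_t,B_t\rangle$.
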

              \begin{proof}
                The value $\xi(A_t,B_t)$ is precisely the number of
                intersections different from $\{A_t,B_t\}$, of the line $\ell_{AB}:=\langle A_t,B_t\rangle$ with
                a Hermitian variety $\cH_t$ defined by the form $\eta_t$.
                We distinguish several cases:
                \begin{enumerate}
                \item 
                  $\eta_t(A_t,A_t)=\eta_t(B_t,B_t)=0$. Then either the line $\ell_{AB}$ is totally isotropic,
                i.e. $\eta_t(A_t,B_t)=0$; hence
                there are $q^2-1$ points on $\ell_{AB}$ different from
                $A_t$ and $B_t$ and $\xi(A_t,B_t):=q^2-1$ or the line $\ell_{AB}$ meets $\cH_{t}$ in a Baer subline, hence $\xi(A_t,B_t)=q-1.$

            \item $\eta_t(A_t,B_t)=0$ and ($\eta_t(A_t,A_t)\neq0$ or $\eta_t(B_t,B_t)\neq0$). Then
              \begin{enumerate}
              \item $\eta_t(A_t,A_t)=0\neq\eta_t(B_t,B_t)$, then the line $\ell_{AB}$
                is tangent
                to $\cH_{t}$ in $A_t$ but not totally isotropic;
                so it meets $\cH_{t}$ only
                in $A_t$ and, consequently,
                \[ \xi(A_t,B_t)=0; \]
                the case $\eta_t(A_t,A_t)\neq0=\eta_t(B_t,B_t)$ is entirely analogous;
              \item $\eta_t(A_t,A_t)\neq0\neq\eta_t(B_t,B_t)$; then $A_t,B_t\not\in\cH_t$ and $B_t$ is in the polar hyperplane of
                $A_t$. In this case the line $\ell_{AB}$ meets $\cH_{t}$ in a Baer subline, hence there are $(q+1)$ points,
                all different from $A_t$ and $B_t$, so
                \[ \xi(A_t,B_t)=q+1. \]
              \end{enumerate}
            \item $\eta_t(A_t,B_t)\neq0$. Then
              \begin{enumerate}
              \item if $\eta_t(A_t,A_t)=0$ or $\eta_t(B_t,B_t)=0$, then
                the line $\ell_{AB}$ meets $\cH_{t}$ in $q+1$
                points, one of them being $A_t$ (or $B_t$);
                so
                \[ \xi(A_t,B_t)=q. \]
              \item If $\eta_t(A_t,A_t)\neq0\neq\eta_t(B_t,B_t)$, then
                observe that since $A_t,B_t\not\in\cH_{t}$ the line $\ell_{AB}$
                is tangent
                to $\cH_{t}$ if, and only if
                the equation
                \begin{equation}
                  \label{e00} \eta_t(A_t-\lambda B_t,A_t-\lambda B_t)=0
                \end{equation}
                admits a solution with multiplicity $q+1>1$.
                Expanding Equation~\eqref{e00}, we have
                \[ \lambda^{q+1}\eta_t(B_t,B_t)-\lambda^q\eta_t(B_t,A_t)-\lambda\eta_t(A_t,B_t)+\eta_t(A_t,A_t)=0, \]
                and its derivative in $\lambda$ is
                \begin{equation}
                  \label{e0l}
                  \lambda^q\eta_t(B_t,B_t)-\eta_t(A_t,B_t)=0.
                \end{equation}
                So, let
                \[ \delta:=\frac{\eta_t(A_t,B_t)}{\eta_t({B_t,B_t})}^q. \]
                We have that $\ell_{AB}$ is tangent to $\cH_{t}$ if and only if
                \eqref{e0f} is satisfied.
                This gives the last two possible values for $\xi(A_t,B_t)$.
              \end{enumerate}
            \end{enumerate}
          \end{proof}
          \begin{corollary}
            \label{czeta}
            The complexity of the function $\zeta(A_t,B_t)$ is
            $O(t)\leq O(m)$.
          \end{corollary}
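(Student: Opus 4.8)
The plan is to read off the complexity directly from the closed-form expression for $\zeta(A_t,B_t)$ established in Lemma~\ref{lemma3}, and to verify that every quantity appearing there can be produced with at most $O(t)$ field operations. That formula expresses $\zeta$ in terms of the factor $\theta(B_t|1)$, the integer $\xi(A_t,B_t)$, and the closed-form scalars $s=(q^2-1)\mu_{m-t-2}+1$ and $q^{2m-2t-4}$, combined by a fixed number of additions and multiplications. So it suffices to bound the cost of each ingredient and then add up.

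First I would account for the Hermitian forms. Computing $\xi(A_t,B_t)$ via Lemma~\ref{lxi} requires only the three values $\eta_t(A_t,A_t)$, $\eta_t(B_t,B_t)$ and $\eta_t(A_t,B_t)$: by exactly the counting used in Lemma~\ref{complexity theta}, each such evaluation costs $t$ products and $t$ conjugations, hence $O(t)$. I would evaluate these three forms once and cache them. Given the cached values, selecting the correct branch of Lemma~\ref{lxi} and, in the worst case, testing Equation~\eqref{e0f} (which amounts to forming $\delta$ and a constant number of further field operations) adds only $O(1)$; thus $\xi(A_t,B_t)$ is obtained in $O(t)$ time. The factor $\theta(B_t|1)$ is computed in $O(t)\le O(m)$ by Lemma~\ref{complexity theta}, and the scalars $s$, $\mu_{m-t-2}$ and $q^{2m-2t-4}$ are closed-form expressions (Equation~\eqref{pts and lines}) evaluated in $O(1)$.

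The main point to stress --- and the only place where the argument could go wrong --- is that no summation over $\FF_{q^2}$ survives in the final formula. The original definition of $\zeta$ ranges over all pairs $(\alpha,\beta)$ with $\alpha\neq0\neq\beta$, so a naive evaluation would cost $O(q^4)$; the whole purpose of Lemmas~\ref{lemma3} and~\ref{lxi} is precisely to collapse those sums (and the auxiliary sum over $\lambda$) into the constant-size case analysis for $\xi$ together with the closed-form scalars. Once this is observed, the only $t$-dependent work is the three shared form evaluations, and summing the contributions yields an overall cost of $O(t)\le O(m)$, as claimed.
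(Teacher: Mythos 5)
Your proposal is correct and follows essentially the same route as the paper: both arguments read the complexity off the closed form of Lemma~\ref{lemma3}, charging $O(m)$ for $\theta(B_t|1)$ via Lemma~\ref{complexity theta} and $O(t)$ for $\xi(A_t,B_t)$ via the three sesquilinear form evaluations needed in Lemma~\ref{lxi}, with everything else costing $O(1)$. Your explicit remarks that the sums over $\FF_{q^2}$ have already been collapsed and that the scalars $s$, $\mu_{m-t-2}$, $q^{2m-2t-4}$ are closed-form are just worthwhile elaborations of what the paper leaves implicit.
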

          \begin{proof}
            In order to determine $\zeta(A_t,B_t)$ we need to
            evaluate the point enumerator $\theta(B_t|1)$, which
            has complexity $O(m)$, and the function $\xi(A_t,B_t)$.
            It is straightforward to see from Lemma~\ref{lxi} that
            evaluating $\xi(A_t,B_t)$ requires to compute three
            sesquilinear products, each of them involving
            $t$ products and $t$ conjugations. So the complexity of
            $\xi(A_t,B_t)$ is also $O(t)\leq O(m)$.
          \end{proof}
          Observe that
          \[ \psi^E\left(\begin{ppmatrix}A_t&|&\alpha\\
                B_t&|&0\end{ppmatrix}\right)=
            \psi^E\left(\begin{ppmatrix}A_t&|&1\\
                B_t&|&0\end{ppmatrix}\right) \]
          for any $\alpha\neq 0$; so
          \[            \sum_{\alpha\neq0}\psi^E\left(\begin{ppmatrix}A_t&|&\alpha\\               B_t&|&0\end{ppmatrix}\right)=
            (q^2-1)\psi^E\left(\begin{ppmatrix}A_t&|&1\\
                B_t&|&0\end{ppmatrix}\right) \]
          On the other hand,
            \[ \psi^E\left(\begin{ppmatrix}A_t&|&0\\
                  B_t&|&\beta\end{ppmatrix}\right)=
              \psi^E\left(\begin{ppmatrix}A_t&|&0\\
                    B_t&|&1\end{ppmatrix}\right) \]
                for any $\beta\neq 0$; so
                \[
                    \sum_{\beta\neq0}\psi^E\left(\begin{ppmatrix}A_t&|&0\\
                        B_t&|&\beta\end{ppmatrix}\right)=
                    (q^2-1)\psi^E\left(\begin{ppmatrix}A_t&|&0\\
                        B_t&|&1\end{ppmatrix}\right) \]
                  Finally,
                  \[     \psi^E\left(\begin{ppmatrix}A_t&|&0\\
                        B_t&|&0\end{ppmatrix}\right)=\sum_{\alpha,\beta}
                    \psi^O\left(\begin{ppmatrix}A_t&|&0&|&\alpha\\
                        B_t&|&0&|&\beta\end{ppmatrix}\right);
                  \]
                  on the other hand
                  \[ \psi^O\left(\begin{ppmatrix}A_t&|&0&|&\alpha\\
                        B_t&|&0&|&\beta\end{ppmatrix}\right)=
                    \psi^O\left(\begin{ppmatrix}A_t&|&0&|&0\\
                        B_t&|&0&|&0\end{ppmatrix}\right), \]
                  so
                  \[     \psi^E\left(\begin{ppmatrix}A_t&|&0\\
                        B_t&|&0\end{ppmatrix}\right)=q^4
                  \psi^O\left(\begin{ppmatrix}A_t&|&0&|&0\\
                      B_t&|&0&|&0\end{ppmatrix}\right). \]
                It follows that
          \[ \boxed{
              \begin{array}{ll}
                \psi^O(D_t)&:= \zeta(A_t,B_t)+
                             (q^2-1)\bigg(\psi^E\left(\begin{ppmatrix}A_t&|&0\\
                  B_t&|&1\end{ppmatrix}\right)+
              \psi^E\left(\begin{ppmatrix}A_t&|&1\\
                  B_t&|&0\end{ppmatrix}\right)\bigg)+ \\ & q^4
            \psi^O\left(\begin{ppmatrix}A_t&|&0&|&0\\
                B_t&|&0&|&0\end{ppmatrix}\right).
              \end{array}}\]

                    \end{enumerate}

\section{Proof of the Main Theorem}
\label{sec-compl}
We now estimate the complexity of the line enumerator
given by the function $\iota$ described by Equation~\eqref{iota} in
Section~\ref{sec2.1} and complete the proof of our main theorem.
By definition,
in order to evaluate $\iota(\ell)$
for a given line $\ell$ we need to compute the value of the function
$\psi$ defined in Section~\ref{sec4} for at most $q^4m$ different inputs
(recall that the alphabet $\fA$ in this case is $\FF_{q^2}^2$).
So we can state that the complexity of $\iota$ is at most $q^4m$ times
the complexity of $\psi$.
We can now proceed to analyze the function $\psi$ step by step.
We refer to the labels in Section~\ref{t even} and \ref{t odd}, as well
as to Table~\ref{t-compl} for the various cases.

\begin{itemize}
\item
  In case (E\ref{E0}), i.e. $t=0$, as well as in cases
  (E\ref{E1}), (E\ref{E2}) and
  (O\ref{O2}), the number $\psi(D_t)$ depends only
  on the length $t$. So, the complexity is $O(1)$.
\item
  In case (O\ref{O1}), i.e. $t=m$, $\psi(D_m)$ is
  either $1$ or $0$ according as $D_m=\begin{ppmatrix} A\\B\end{ppmatrix}$ represents a totally isotropic line $\ell=\langle A,B\rangle$ or not.
  To check whether this
  is the case we need to evaluate $\eta_m(A,A)$, $\eta_m(B,B)$ and $\eta_m(A,B)$.
  This requires $3m$ products as well as $3m$ conjugations; so the overall
  complexity of this case is $O(m)$.
  \item
  In cases (E\ref{E3}) and (E\ref{E4}) we need to evaluate the number of totally isotropic
  points   whose normalized coordinates begin with either $A$ or $B$, i.e.
  the functions $\theta(A)$ and $\theta(B)$.
  By Lemma~\ref{complexity theta}, the complexity of $\theta$ is $O(m)$; consequently
  the complexity of this case is also $O(m)$.
  \item
    The value of $\psi^E(D_t)$ in cases (E\ref{E5}) and (E\ref{E6})
    is a function of $\psi^O(D'_{t+1})$ where
    $D'_{t+1}$ is as in cases
     (O\ref{O3}) or  (O\ref{O4}).
     More in detail,
     the complexity of case (E\ref{E5}) is the same as the complexity
     of case (O\ref{O4}), while the complexity of case (E\ref{E6}) is
     the same as the sum of the complexities of cases (O\ref{O3}) and
     (O\ref{O4}).
\item
  The computation of $\psi^O(D_t)$ in both cases (O\ref{O3}) and (O\ref{O4})
  requires to compute the value of $\psi^E(D_{t+1})$ in cases of
  the form
  (E\ref{E2}),
  (E\ref{E3}) or (E\ref{E4})
  and then evaluate functions of the form $\psi^O(D_{t+2})$ with suitable
  prefixes of length $t+2$.
  We have already seen that the complexity of cases
  (E\ref{E2}),
  (E\ref{E3}) or
  (E\ref{E4})
  is at most $O(m)$.

  If $t+2=m$, then $\psi^O(D_{t+2})$ is
  of type (O\ref{O1}) and has also complexity $O(m)$, so, for $t=m-2$
  the overall complexity of both (O\ref{O3}) and (O\ref{O4}) is
  $O(m)$.

  Suppose now $t=m-2i$ with $1\leq i\leq \lfloor m/2\rfloor$.
  The complexity of the
  function $\zeta(A_t,B_t)$ occurring in case (O\ref{O4})
  is $O(m)$, see Corollary~\ref{czeta}.

  In any case, the complexity of $\psi^O(D_{m-2i})$
  is the sum of $O(m)$ and  the complexity of $\psi^O(D_{m-2i+2})$.
  In turn, the complexity of $\psi^O(D_{m-2i+2})$ is $O(m)$ plus
  the complexity of $\psi^O(D_{m-2i+4})$. After $i$  steps,
  the complexity of $\psi^O(D_{m-2i})$ is $i$ times the
  complexity of $\psi^O(D_m)$. Since $i=O(m)$, we see that
  the total complexity for cases (O\ref{O3}) and (O\ref{O4})
  is at most $O(m^2)$.
  \end{itemize}
  The above argument shows that the maximum complexity of the
  function $\psi$ is
  $O(m^2)$. So, the complexity of $\iota$ is $O(q^4m^3)$.
  \par
  \rightline{$\qed$}

\begin{table}
  \newcolumntype{C}{>$c<$}
  \caption{Enumerator for Hermitian Line Grassmannians}
  \label{t-compl}
  \begin{small}
    \[
      \begin{array}{c|C|C|c|c}
      D_t=\begin{ppmatrix} A_t\\ B_t\end{ppmatrix} & {t} & \text{Case} & \psi(S) & \text{Complexity} \\ \hline
\emptyset & $t=0$ & (E\ref{E0})  &
\begin{array}[c]{c}
\\
N_m \\
\\
\end{array}  & O(1) \\ \hline
            \begin{ppmatrix}
       a_1 & a_2 & \cdots  & a_{m} \\
       b_1 & b_2 & \cdots   & b_{m} \\
     \end{ppmatrix} & $t=m$ & (O\ref{O1}) & \begin{cases}
       1 & \text{if } \begin{array}[t]{l} \eta_m(A_m,B_m)=\eta_m(A_m,A_m)\\
                        \quad=\eta_m(B_m,B_m)=0 \\
                        \end{array} \\
       0 & \text{otherwise} \end{cases}
       & O(m) \\ \hline

     \begin{ppmatrix}
       0 & 0 & \cdots & 0 & 0 \\
       0 & 0 & \cdots  &0 & 0 \\
     \end{ppmatrix} & \mbox{Even} & (E\ref{E1}) &
                                                  \mu_{m-t-1}+q^4N_{m-t-1} & O(1) \\
        \hline
        \begin{array}{c}
        \begin{ppmatrix}
       a_1 & a_2 & \cdots & a_{t-1} & a_t \\
       0 & 0 & \cdots & 0 & 0 \\
     \end{ppmatrix} \\
          a_t\neq 0
        \end{array}
                   & \mbox{Even} & (E\ref{E2}) & q^{2m-2t-3}\mu_{m-t-1}
                                                                                 & O(1) \\ \hline

\begin{array}{c}
     \begin{ppmatrix}
       a_1 & a_2 & \cdots & a_{t-1} & 0 \\
       b_1  & b_2  & \cdots & b_{t-1}  & b_t
     \end{ppmatrix} \\
           b_t\neq0\end{array}&
                     \mbox{Even} & (E\ref{E3}) & \frac{1}{q^2}\theta(A_t)\theta(B_t) &
                                                                       O(m) \\
\hline
        \begin{array}{c}
     \begin{ppmatrix}
       a_1 & a_2 & \cdots & a_{t-1} & a_t \\
       b_1  & b_2  & \cdots & b_{t-1}  & 0
     \end{ppmatrix} \\
  a_t\neq 0,  B_t\neq\mathbf{0}
  \end{array}
            &
                     \mbox{Even} & (E\ref{E4}) & \frac{1}{q^2}\theta(A_t)\theta(B_t) &
                                                                       O(m) \\
\hline
        \begin{array}{c}   \begin{ppmatrix}
       a_1 & a_2 & \cdots & a_{t-1} & 0 \\
       b_1  & b_2  & \cdots & b_{t-1}  & 0
     \end{ppmatrix} \\
    A_t\neq\mathbf{0}\neq B_t
    \end{array}
                                                   & \mbox{Even} & (E\ref{E5}) & q^4\psi^O\begin{ppmatrix} A_t&|&0 \\ B_t&|&0 \end{ppmatrix} & O(m^2) \\
        \hline
        \begin{array}{c}
     \begin{ppmatrix}
       a_1 & a_2 & \cdots & a_{t-1} & 0 \\
       0  & 0  & \cdots & 0  & 0
     \end{ppmatrix} \\
          A_t\neq\mathbf{0}
        \end{array}
                                                   & \mbox{Even} & (E\ref{E6}) & q^4\psi^O\begin{ppmatrix} A_t&|&0 \\ \mathbf{0}&|&0 \end{ppmatrix}+\psi^O\begin{ppmatrix} A_t&|&0\\ \mathbf{0}&|&1\end{ppmatrix} & O(m^2) \\
        \hline
     \begin{ppmatrix}
       0 & 0 & \cdots & 0 & 0 \\
       0 & 0 & \cdots  &0 & 0 \\
     \end{ppmatrix} & \mbox{Odd} & (O\ref{O2}) & N_{m-t} & O(1) \\
        \hline
        \begin{array}{c}
     \begin{ppmatrix}
       a_1 & a_2 & \cdots  & a_{t} \\
       0      &   0      & \cdots         &  0
     \end{ppmatrix} \\ A_t\neq\mathbf{0}\end{array}
                      & \mbox{Odd} & (O\ref{O3}) &
                                    \begin{array}{l}
                                      (q^2-1)\psi^E\begin{ppmatrix} A_t&|&1\\\mathbf{0}&|&0\end{ppmatrix}+\psi^E\begin{ppmatrix}A_t&|&0\\\mathbf{0}&|&1\end{ppmatrix}+\\
                                      q^2\psi^O\begin{ppmatrix}
                                        A_t&|&0&|&0 \\
                                        \mathbf{0}&|&0&|&0 \\
                                      \end{ppmatrix}+\psi^O
                                    \begin{ppmatrix}
                                        A_t&|&0&|&0 \\
                                        \mathbf{0}&|&0&|&1 \\
                                      \end{ppmatrix}
                                    \end{array} & O(m^2) \\
        \hline
\begin{array}{c}
     \begin{ppmatrix}
       a_1 & a_2 & \cdots  & a_{t} \\
       b_1      & b_2 & \cdots        &  b_t
     \end{ppmatrix} \\
  A_t\neq\mathbf{0}\neq B_t
           \end{array}& \mbox{Odd} & (O\ref{O4}) &
                                    \begin{array}{l}
                                      \zeta(A_t,B_t)+
                                      (q^2-1)\bigg(\psi^E\begin{ppmatrix} A_t&|&1\\B_t&|&0\end{ppmatrix}+\\ \psi^E\begin{ppmatrix}A_t&|&0\\B_t&|&1\end{ppmatrix}\bigg)+
                                      q^4\psi^O\begin{ppmatrix}
                                        A_t&|&0&|&0 \\
                                        B_t&|&0&|&0 \\
                                      \end{ppmatrix}
                                    \end{array} & O(m^2) \\
\hline

   \end{array}
 \]
\centerline{The function $\zeta(A_t,B_t)$ is computed in Lemma~\ref{lemma3} and Lemma~\ref{lxi}.}
\end{small}

\end{table}



\section{Applications}\label{sec6}
\subsection{Encoding and decoding}
Assume $m$ odd and
let $B:=(e_1,\ldots,e_m)$ a  basis of $V(m,q^2)$ such
that the sesquilinear form $\eta$ has Equation~\eqref{eta}.
Our arguments apply also when $m$ is even, and $V(m,q^2)$ is
 the hyperplane of equation $x_{1}=0$ embedded in $V(m+1,q^2)$.
Let also $K={m\choose 2}$ and suppose
$\mathbf{w}=(w_1,\ldots,w_K)$ to be a message.
Using the enumerator $\iota$, we define the $i$-th component $c_i$ of a
codeword $\mathbf{c}=(c_1,\ldots,c_N)$ representing $\mathbf{w}$ as
\[ c_i:=\omega_{\mathbf{w}}(A_{i-1},B_{i-1}):=A_{i-1} W B_{i-1}^T \]
where $\begin{ppmatrix} A_{i-1}\\ B_{i-1}\end{ppmatrix}=\iota^{-1}(i-1)$
represents the $i$--th element $\ell=\langle A_{i-1},B_{i-1}\rangle$
of the polar Grassmannian $\cH_{n,2}$
and $W=W_0-W_0^T$ is the antisymmetric matrix obtained from
\[ W_0=\begin{pmatrix}
    0 & w_1 & w_2  & \ldots & w_{m-1} \\
    &  0  & w_{m} & \ldots & w_{2m-3} \\
    & & \ddots &  &  \vdots \\
    & &        &  0 &  w_K \\
    & & & &  0 \\
  \end{pmatrix} \]
representing the alternating bilinear form $\omega_{\mathbf{w}}$ induced
by $\mathbf{w}$ with respect to the basis $B$. Note that                        $W$ is precisely the Gram matrix of the alternating bilinear form $\omega_{\mathbf{w}}$.

For $1\leq i<j\leq m$, denote by ${\mathfrak w}_{ij}$ the entry in $W_0$ in position $(i,j)$.
Then
\[ {\mathfrak w}_{ij}:=w_{\frac{(i-1)(2n-i)}{2}+j-i}. \]
Denote by $\varphi:\FF_{q^2}^K\to\FF_{q^2}^N$ the function mapping a message
$\mathbf w$ to the corresponding codeword $\mathbf c$.
\begin{lemma}
  \label{llin}
  The function $\varphi:\FF_{q^2}^K\to\FF_{q^2}^N$ is linear.
\end{lemma}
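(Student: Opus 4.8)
The plan is to show that $\varphi$ respects both scalar multiplication and addition by tracing how a message $\mathbf{w}$ enters the definition of each codeword component. The key observation is that the map $\mathbf{w}\mapsto W$ sending a message to its associated Gram matrix is itself linear: the entries $\mathfrak{w}_{ij}=w_{(i-1)(2n-i)/2+j-i}$ depend linearly on the coordinates $w_1,\ldots,w_K$, and the antisymmetrization $W=W_0-W_0^T$ is a linear operation on matrices. Hence $\mathbf{w}\mapsto W_{\mathbf{w}}$ is a linear bijection from $\FF_{q^2}^K$ onto the space of alternating matrices, and this is the only place where $\mathbf{w}$ appears in the construction.

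First I would fix the index $i$ and recall that the $i$-th component is $c_i=\omega_{\mathbf{w}}(A_{i-1},B_{i-1})=A_{i-1}W_{\mathbf{w}}B_{i-1}^T$, where crucially the pair $(A_{i-1},B_{i-1})=\iota^{-1}(i-1)$ does \emph{not} depend on $\mathbf{w}$ but only on the index $i$ and the fixed enumerator $\iota$. This decoupling is what makes the argument go through: for each fixed $i$, the map sending $W$ to the scalar $A_{i-1}WB_{i-1}^T$ is a linear functional on the space of matrices, since it is bilinear in the row and column vectors and those vectors are held fixed.

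Next I would compose the two linear maps. Given messages $\mathbf{w}$ and $\mathbf{w}'$ and scalars $\lambda,\mu\in\FF_{q^2}$, linearity of $\mathbf{w}\mapsto W_{\mathbf{w}}$ gives $W_{\lambda\mathbf{w}+\mu\mathbf{w}'}=\lambda W_{\mathbf{w}}+\mu W_{\mathbf{w}'}$, and then the fixed functional yields
\[
c_i(\lambda\mathbf{w}+\mu\mathbf{w}')=A_{i-1}(\lambda W_{\mathbf{w}}+\mu W_{\mathbf{w}'})B_{i-1}^T=\lambda\,c_i(\mathbf{w})+\mu\,c_i(\mathbf{w}').
\]
Since this holds for every component index $i=1,\ldots,N$, we conclude $\varphi(\lambda\mathbf{w}+\mu\mathbf{w}')=\lambda\varphi(\mathbf{w})+\mu\varphi(\mathbf{w}')$, which is exactly linearity of $\varphi$.

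I do not expect any serious obstacle here; the statement is essentially a bookkeeping verification. The only point requiring mild care is making explicit that the representatives $A_{i-1},B_{i-1}$ are chosen independently of $\mathbf{w}$ (they come from inverting the enumerator on the integer $i-1$), so that no hidden $\mathbf{w}$-dependence enters through the choice of RREF representatives; once this is noted, the result follows from composing two linear maps. A secondary routine check is confirming that the index formula $\mathfrak{w}_{ij}=w_{(i-1)(2n-i)/2+j-i}$ indeed gives a bijective linear indexing of the $\binom{m}{2}$ above-diagonal positions, but this is immediate from the definition of $W_0$.
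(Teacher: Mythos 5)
Your proposal is correct and follows essentially the same route as the paper: both verify linearity componentwise via $c_i = A_{i-1} W_{\mathbf{w}} B_{i-1}^T$, using that the pair $(A_{i-1},B_{i-1})=\iota^{-1}(i-1)$ is independent of the message and that $\mathbf{w}\mapsto W_{\mathbf{w}}$ is linear. If anything, you make explicit two points the paper leaves implicit (the linearity of the message-to-Gram-matrix map and the $\mathbf{w}$-independence of the representatives), which is a welcome clarification rather than a deviation.
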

\begin{proof}
Let ${\mathbf w}$ and ${\mathbf w}'$ be two messages and
$\mathbf c$ and ${\mathbf c}'$ be the corresponding codewords.
Let also $\mathbf{c''}=\alpha\mathbf{c}+\beta\mathbf{c}'$ and
$\mathbf{w}''=\alpha\mathbf{w}+\beta\mathbf{w}'$ for $\alpha,\beta\in\FF_{q^2}$.
Then
\[ c_i''=\alpha c_i+\beta c_i'=\alpha A_{i-1} W B_{i-1}^T+
  \beta A_{i-1} W'B_{i-1}^T=A_{i-1}(\alpha W+\beta W')B_{i-1}^T, \]
so $\varphi(\alpha\mathbf{w}+\beta\mathbf{w}')=
\alpha\varphi(\mathbf{w})+\beta\varphi(\mathbf{w}')$ and $\varphi$ is
linear.
\end{proof}

Given a codeword ${\mathbf c}=(c_i)_{i=1}^N$ we now show how to
uniquely extract the entries
${\mathfrak w}_{ij}$ with $i<j$ of $W_0$
from $\mathbf{c}$ with
constant complexity $O(1)$; clearly, this is equivalent to determine the message $\mathbf{w}=(w_i)_{i=1}^K$.

\begin{theorem}
  \label{Decode}
  Suppose $m$ to be odd.
  Let $\mathbf{c}$ be a codeword and $W=({\fmm}_{ij})_{1\leq i,j\leq 2n+1}$ be the antisymmetric matrix associated
  with the message $\mathbf{w}$ mapped to $\mathbf{c}$ using the
  function $\varphi$.
  Suppose that the pair $(i,j)$ with $1\leq i<j\leq m$ is in one of the following types:
\begin{enumerate}[Type I:]
\item ($i\geq 2$ even and $j\geq i+2$)
  or ($i$ odd and $j\geq i+1$);
\item $i\geq 2$ even and $j=i+1$;
\item $i=1$ and $j>i$.
\end{enumerate}
Then the following holds:
\begin{itemize}
\item  If $(i,j)$ is of Type I then ${\fmm}_{ij}=c_{\iota(\ell_{i,j})+1}$ where
  $\ell_{i,j}:=\langle e_i,e_j\rangle$.
\item If $(i,j)$ is of Type II then ${\fmm}_{ij}$  can be obtained by solving a system of $2$ linear equations in $2$ unknowns.
\item If $(i,j)$ is of Type III then ${\fmm}_{ij}$  can be obtained by solving a linear equation.
\end{itemize}
\end{theorem}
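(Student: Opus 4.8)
The plan is to exploit the defining identity $c_i=A_{i-1}WB_{i-1}^{T}=\omega_{\mathbf w}(A_{i-1},B_{i-1})$, where $\begin{ppmatrix}A_{i-1}\\ B_{i-1}\end{ppmatrix}=\iota^{-1}(i-1)$ is the unique RREF representation of the $i$-th totally isotropic line. Thus recovering an entry $\fmm_{ij}$ amounts to producing totally isotropic lines $\ell$ whose RREF I can write down explicitly and for which $\omega_{\mathbf w}$ evaluated on the two rows isolates $\fmm_{ij}$; the codeword component at position $\iota(\ell)+1$ then supplies a linear relation among the $\fmm_{pq}$. Throughout I use the explicit shape of $\eta_m$ from Equation~\eqref{eta}: $e_1$ is the only anisotropic basis vector ($\eta_m(e_1,e_1)=1$), every $e_k$ with $k\geq 2$ is isotropic, and the only nonzero pairings between distinct basis vectors are $\eta_m(e_{2r},e_{2r+1})=\eta_m(e_{2r+1},e_{2r})=1$. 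Hence, for $1\leq i<j\leq m$, the line $\langle e_i,e_j\rangle$ is totally isotropic precisely when $i\geq 2$ and $\{i,j\}\neq\{2r,2r+1\}$, which (for $i\geq 2$) is exactly the Type~I condition, the case $i=1$ being Type~III.

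For a Type~I pair the matrix $\begin{ppmatrix}e_i\\ e_j\end{ppmatrix}$ is already in RREF, with pivots in columns $i<j$, so it equals $\iota^{-1}(\iota(\ell_{i,j}))$. Since $W=W_0-W_0^{T}$ with $W_0$ strictly upper triangular, one has $e_iWe_j^{T}=\fmm_{ij}$, whence $\fmm_{ij}=c_{\iota(\ell_{i,j})+1}$ directly. This is the base case, and every remaining entry will be recovered from these.

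For the non totally isotropic pairs I perturb by basis vectors from a disjoint paired block $(2s,2s+1)$, which exists because $n\geq 2$. For a Type~II pair $(2r,2r+1)$ I take, for a parameter $a\neq 0$ and $b=-a^{-q}$, the line $\langle e_{2r}+a\,e_{2s},\,e_{2r+1}+b\,e_{2s+1}\rangle$: the relation $a^qb=-1$ cancels the sole obstruction $\eta_m(e_{2r},e_{2r+1})=1$, so the line is totally isotropic, and choosing $s>r$ (or $s<r$ for the top block) its rows already form a valid RREF. Evaluating $\omega_{\mathbf w}$ on these rows gives $\fmm_{2r,2r+1}+ab\,\fmm_{2s,2s+1}$ plus terms $\fmm_{2r,2s+1},\fmm_{2r+1,2s}$ that are of Type~I and hence known; two choices $a_1,a_2$ produce two equations in the two unknowns $\fmm_{2r,2r+1}$ and $\fmm_{2s,2s+1}$, with coefficient matrix $\begin{ppmatrix}1 & a_1b_1\\ 1 & a_2b_2\end{ppmatrix}$. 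For a Type~III pair $(1,j)$ I instead kill the anisotropy of $e_1$ by a vector $v_0\in\langle e_{2s},e_{2s+1}\rangle$ with $\eta_m(v_0,v_0)=-1$ and use $\langle e_1+v_0,\,e_j\rangle$; this is totally isotropic, its rows are in RREF, and $\omega_{\mathbf w}$ on them equals $\fmm_{1,j}$ plus Type~I terms, i.e.\ a single linear equation with unit leading coefficient.

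The main technical point is the non-degeneracy of the Type~II system. Using $b=-a^{-q}$ one gets $ab=-a^{1-q}$, so the determinant of the coefficient matrix is $a_1^{1-q}-a_2^{1-q}$; since the kernel of $x\mapsto x^{1-q}$ on $\FF_{q^2}^{*}$ is $\FF_q^{*}$, this is nonzero as soon as $a_1/a_2\notin\FF_q$, which can always be arranged (the top-block case $s<r$ is identical with exponent $q-1$). The remaining points are bookkeeping: one checks by the parity analysis of the pairing that every correction term $\fmm_{pq}$ created by a perturbation is genuinely of Type~I, never a further Type~II or Type~III entry beyond the one solved for, and one verifies that the perturbed rows stay in RREF so that the index $\iota(\ell)+1$ is identified correctly. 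These are routine once the disjoint block $(2s,2s+1)$ is fixed, and they yield each $\fmm_{ij}$, and hence $\mathbf w$, in constant time per entry.
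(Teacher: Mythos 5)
Your proof is correct, and it has the same skeleton as the paper's: Type I entries are read directly off the codeword via coordinate lines $\langle e_i,e_j\rangle$, Type II entries come from two auxiliary totally isotropic lines whose codeword components give a nonsingular $2\times 2$ system coupling two block entries, and Type III entries come from one auxiliary line through an isotropic perturbation of $e_1$. The auxiliary lines themselves, however, are genuinely different, and the comparison is instructive. For Type II the paper couples the \emph{adjacent} blocks $(i,i+1)$ and $(i+2,i+3)$ by the ``crossed'' lines $\ell=\langle e_i+(\alpha\sigma+\beta)e_{i+3},\,\sigma e_{i+1}+e_{i+2}\rangle$ and its conjugate $\ell^q$, where $\sigma\in\FF_{q^2}\setminus\FF_q$ satisfies $\sigma^q+\alpha\sigma+\beta=0$, and nonsingularity is the determinant $\beta(\sigma^q-\sigma)\neq0$; you couple $(2r,2r+1)$ ``in parallel'' with an \emph{arbitrary} disjoint block $(2s,2s+1)$ via $\langle e_{2r}+ae_{2s},\,e_{2r+1}+be_{2s+1}\rangle$, $a^qb=-1$, and get nonsingularity from $a_1/a_2\notin\FF_q$ since the kernel of $x\mapsto x^{1-q}$ on $\FF_{q^2}^*$ is $\FF_q^*$. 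The two devices cost the same; yours avoids introducing the minimal polynomial of $\sigma$, while the paper's stays inside a window of four consecutive coordinates. For Type III the difference is actually in your favour: the paper uses the vector $e_1-e_2+e_3$, but with the form of Equation~\eqref{eta} one has
\[
\eta_m(e_1-e_2+e_3,\,e_1-e_2+e_3)=1+(-1)^q-1=(-1)^q\neq 0,
\]
so that vector is anisotropic in every characteristic and the paper's auxiliary lines for Type III are not points of $\cH_{n,2}$ as written; your requirement $\eta_m(v_0,v_0)=-1$ with $v_0\in\langle e_{2s},e_{2s+1}\rangle$ (solvable by surjectivity of the trace) is precisely the repair needed, so your Type III argument is the corrected form of the paper's. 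One bookkeeping point you should spell out if you write this up: for the top block ($s<r$) your rows have leading entries $a$ and $b$ rather than $1$, so they must be rescaled before the uniqueness of the RREF representation identifies the codeword index --- this rescaling is exactly where your exponent $q-1$ (in place of $1-q$) comes from, and it leaves the nonsingularity criterion $a_1/a_2\notin\FF_q$ unchanged.
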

\begin{proof}
 If $(i,j)$ is of Type I then the line $\ell_{i,j}:=\langle e_i,e_j\rangle$ is totally
 isotropic for the Hermitian form $\eta$; furthermore
 $\omega_{\mathbf w}(e_i,e_j)={\fmm}_{ij}$
  and we are done.

  When $(i,j)$ is of Type II,
let $\sigma$ be an element of $\FF_{q^2}\setminus\FF_q$ such that
$\sigma^q+\alpha\sigma+\beta=0$ with $\alpha,\beta\in\FF_q\setminus\{0\}$.
Consider two lines
$\ell:=\langle e_i+(\alpha\sigma+\beta) e_{i+3}, \sigma e_{i+1}+e_{i+2}\rangle$ and
$\ell^q:=\langle e_i+(\alpha\sigma^q+\beta)e_{i+3}, \sigma^qe_{i+1}+e_{i+2}\rangle$.
It is straightforward to see that both lines are totally isotropic, so they correspond
to two components in the codeword ${\mathbf c}$;
call them respectively $c_x:=c_{\iota(\ell)+1}$ and $c_y:=c_{\iota(\ell^q)+1}$.
Then we have
\begin{equation}\label{typII} \begin{cases}
  \sigma {\fmm}_{i,i+1}+{\fmm}_{i,i+2}-\sigma(\alpha\sigma+\beta){\fmm}_{i+1,i+3}-(\alpha\sigma+\beta){\fmm}_{i+2,i+3}=c_x \\
\sigma^q
{\fmm}_{i,i+1}+{\fmm}_{i,i+2}-\sigma^q(\alpha\sigma^q+\beta){\fmm}_{i+1,i+3}-(\alpha\sigma^q+\beta)
\mathfrak{m}_{i+2,i+3}=c_y.
\end{cases} \end{equation}
The entries ${\fmm}_{i+1,i+3}$ and ${\fmm}_{i,i+2}$ correspond to indexes of Type I; thus they  can be read off $\mathbf{c}$ directly.
We are left with a linear system of two equations in the unknowns
$\fmm_{i,i+1}$ and $\fmm_{i+2,i+3}$.
Since
\[ \det\begin{pmatrix} \sigma & -(\alpha\sigma+\beta) \\
    \sigma^q & -(\alpha\sigma^q+\beta)
  \end{pmatrix}=\beta(\sigma^q-\sigma)\neq0,
\]
this linear system admits a unique solution and can be solved with
complexity $O(1)$.

Suppose $(i,j)=(1,j)$ is of Type III.
If $j>3$, we consider the line $\ell=\langle e_1-e_2+e_3,e_j\rangle$.
A straightforward computation shows that the corresponding entry $c_z:=c_{\iota(\ell)+1}$ is
\[ \fmm_{1j}-\fmm_{2j}+\fmm_{3j}=c_z \]
and both $(2,j)$ and $(3,j)$ are of Type I; thus we just have to solve this
equation.
As for the remaining
coefficients ${\fmm}_{12}$ and ${\fmm}_{13}$, we use the entries
corresponding to $\ell^{12}=\langle e_1-e_4+e_5,e_2\rangle$ and
$\ell^{13}=\langle e_1-e_4+e_5,e_3\rangle.$
\end{proof}

\begin{corollary}
  \label{dcore}
  Suppose $m$ to be even.
  Let $\mathbf{c}$ be a codeword and $W=({\fmm}_{ij})_{1\leq i,j\leq 2n+1}$ be the antisymmetric matrix associated
  with the message $\mathbf{w}$ mapped to $\mathbf{c}$ using the
  encoding $\varphi$. Suppose that the pair $(i,j)$ with $1\leq i<j\leq m$ is in one of the following types:
\begin{enumerate}[Type I:]
\item ($i\geq 1$ odd and $j\geq i+2$)
  or ($i$ even and $j\geq i+1$);
\item $i\geq 2$ odd and $j=i+1$;
\end{enumerate}
Then the following holds:
\begin{itemize}
\item If $(i,j)$ is of Type I then ${\fmm}_{ij}=c_{\iota(\ell_{i,j})+1}$ where
$\ell_{i,j}:=\langle e_i,e_j\rangle$.
\item If $(i,j)$ is of Type II then ${\fmm}_{ij}$  can be obtained by solving a system of $2$ linear equations in $2$ unknowns.
\end{itemize}
\end{corollary}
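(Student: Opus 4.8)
The plan is to run the argument of Theorem~\ref{Decode} in the even-dimensional setting, where it simplifies. Recall from Section~\ref{sec2} that for $m$ even we realise $\cH_m$ inside $\cH_{m+1}$ as the hyperplane section $x_1=0$; restricting the form $\eta_{m+1}$ of~\eqref{eta} to this section deletes the single self-paired coordinate $x_1^{q+1}$, so the induced form on $V_m$ is a sum of $n$ hyperbolic pairs and, after re-indexing, couples exactly the consecutive positions $(2k-1,2k)$. Consequently every basis vector $e_i$ is now $\eta$-isotropic, and the only anisotropic coordinate lines are the $\langle e_i,e_{i+1}\rangle$ with $i$ odd. This is precisely why the even statement carries no Type~III (there is no anisotropic basis vector playing the role of $e_1$) and why the Type~II indices are the \emph{odd} ones rather than the even ones.

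For $(i,j)$ of Type~I I would read the coefficient off directly: from the coupling pattern above one checks at once that $\eta(e_i,e_i)=\eta(e_j,e_j)=0$ and that $(i,j)$ is not a hyperbolic pair, whence $\eta(e_i,e_j)=\eta(e_j,e_i)=0$ and $\ell_{i,j}=\langle e_i,e_j\rangle$ is a totally isotropic line. Thus $\ell_{i,j}$ is a point of $\cH_{n,2}$, it occupies the single position $\iota(\ell_{i,j})+1$ of $\mathbf c$, and by the definition of $\varphi$ the value stored there is $\omega_{\mathbf w}(e_i,e_j)=\fmm_{ij}$. For $(i,j)=(i,i+1)$ of Type~II the line is anisotropic, so I would repeat the conjugation device of Theorem~\ref{Decode}: pick $\sigma\in\FF_{q^2}\setminus\FF_q$ with $\sigma^q+\alpha\sigma+\beta=0$, $\alpha,\beta\in\FF_q\setminus\{0\}$, form the two Frobenius-conjugate totally isotropic lines supported on $e_i,\dots,e_{i+3}$, read their two components from $\mathbf c$, and solve the resulting linear system in $\fmm_{i,i+1}$ and $\fmm_{i+2,i+3}$; its matrix has determinant $\beta(\sigma^q-\sigma)\neq0$, so the two unknowns are determined uniquely in $O(1)$.

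The step requiring the most care is checking that, for the shifted coupling pattern, the two auxiliary lines are genuinely totally isotropic and that the $2\times2$ system they produce is nonsingular --- this is where the explicit choice of $\sigma$ and of the four-coordinate block are used, exactly as in the odd case. The only genuinely new bookkeeping concerns the boundary: for the topmost hyperbolic pair the block $e_i,\dots,e_{i+3}$ overruns the coordinate range, and there I would instead anchor the construction on the preceding four coordinates, or simply observe that each Type~II system already returns the neighbouring coefficient $\fmm_{i+2,i+3}$, so that one sweep of the $2\times2$ systems recovers every Type~II entry. Since no anisotropic direction survives the restriction, no analogue of the Type~III case is needed, which is the simplification the even-dimensional embedding buys us.
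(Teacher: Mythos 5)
Your proof is correct and takes essentially the same route as the paper: the paper's own proof is a two-line reduction --- regard $\cH_m$ as the hyperplane section $x_1=0$ of $\cH_{m+1}$, renumber so that the form becomes $\eta(x,y)=x_1y_2^q+\cdots$ (a sum of hyperbolic pairs, hence no anisotropic coordinate and no Type III), and invoke Theorem~\ref{Decode} --- which is exactly the argument you spell out. Your additional care with the last hyperbolic pair, where the four-coordinate block $e_i,\dots,e_{i+3}$ would overrun the coordinate range and one must instead anchor the $2\times 2$ system on the preceding pair, fills in a boundary detail that the paper's proof (and indeed its proof of Theorem~\ref{Decode} itself) leaves implicit.
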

\begin{proof}
  For $m$ even, we can regard the Hermitian polar space $\cH_m$ as
  the hyperplane section of $\cH_{m+1}$ with respect to the hyperplane
  $x_1=0$. By renumbering the indexes, we can write the form inducing
  $\cH_n$ as $\eta(x,y)=x_1y_2^q+\cdots$ instead of
  $x_1y_1^q+x_2y_3^q+\cdots, x_1=y_1=0$. The corollary now
  follows from the previous theorem.

\end{proof}
\begin{corollary}
  \label{cinj}
  The map $\varphi:\FF_{q^2}^K\to\FF_{q^2}^N$ is injective.
\end{corollary}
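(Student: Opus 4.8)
The plan is to deduce injectivity immediately from what has already been proved, namely linearity together with the explicit decoding procedure. By Lemma~\ref{llin} the map $\varphi$ is $\FF_{q^2}$-linear, so establishing injectivity is equivalent to showing that its kernel is trivial; that is, it suffices to check that the only message $\mathbf{w}$ with $\varphi(\mathbf{w})=\mathbf{0}$ is $\mathbf{w}=\mathbf{0}$. Since a message $\mathbf{w}=(w_1,\ldots,w_K)$ is in bijection with the collection of entries $\fmm_{ij}$ ($1\le i<j\le m$) of $W_0$, it is enough to recover each $\fmm_{ij}$ from the codeword $\mathbf{c}=\varphi(\mathbf{w})$ and to observe that this recovery sends $\mathbf{c}=\mathbf{0}$ to $\fmm_{ij}=0$ for all $i,j$.

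First I would invoke Theorem~\ref{Decode} (for $m$ odd), respectively Corollary~\ref{dcore} (for $m$ even), which exhibits for every pair $(i,j)$ an explicit rule extracting $\fmm_{ij}$ from finitely many components of $\mathbf{c}$: either directly as a single component (Type~I), or as the unique solution of a small linear system whose right-hand side is built from components of $\mathbf{c}$ and from already-recovered Type~I entries (Types~II and~III). These rules define a well-defined map $\mathbf{c}\mapsto\mathbf{w}$ that is a left inverse of $\varphi$, which by itself yields injectivity. Concretely, putting $\mathbf{c}=\mathbf{0}$ makes every Type~I read-off vanish, and then the right-hand sides of the Type~II and Type~III systems are all zero, so their unique solutions are zero as well; hence every $\fmm_{ij}=0$ and $\mathbf{w}=\mathbf{0}$.

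The one point that genuinely needs checking --- and the only possible obstacle --- is that the case list in Theorem~\ref{Decode}/Corollary~\ref{dcore} is exhaustive, so that no coordinate $w_1,\ldots,w_K$ is left undetermined. For $m$ odd this is a short verification: a pair with $i$ odd (and $i\ge 3$) or with $i\ge 2$ even and $j\ge i+2$ is of Type~I, a pair with $i\ge 2$ even and $j=i+1$ is of Type~II, and every pair with $i=1$ is of Type~III, and these three families visibly cover all admissible $(i,j)$ with $1\le i<j\le m$; the case $m$ even is identical after the index renumbering used in the proof of Corollary~\ref{dcore}. The solvability of the auxiliary systems in Types~II and~III has already been settled inside the proof of Theorem~\ref{Decode} (the relevant $2\times 2$ determinant equals $\beta(\sigma^q-\sigma)\neq0$, and the Type~III step reduces to a single linear equation with unit coefficient). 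Once exhaustiveness is confirmed, the triviality of $\ker\varphi$, and therefore the injectivity of $\varphi$, follows at once.
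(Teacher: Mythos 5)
Your proof is correct and takes essentially the same route as the paper: linearity (Lemma~\ref{llin}) reduces injectivity to showing the kernel is trivial, and the decoding rules of Theorem~\ref{Decode} and Corollary~\ref{dcore} force every entry $\fmm_{ij}$ of a message with zero codeword to vanish --- Type~I entries directly, Types~II and~III as unique solutions of homogeneous linear systems. Your explicit verification that the three types exhaust all pairs $(i,j)$ with $1\leq i<j\leq m$ (noting that ``$i$ odd'' in Type~I must mean $i\geq 3$) is left implicit in the paper, but this does not alter the argument.
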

\begin{proof}
  Suppose $\varphi(\mathbf{w})=\mathbf{0}$; by the proof of Theorem~\ref{Decode}, all indexes $\fmm_{ij}$ of Type I must be $0$.
  Also, for indexes of Type II System~\eqref{typII} becomes
  \[ \begin{cases}
      \sigma\fmm_{i,i+1}-(\alpha\sigma+\beta)\fmm_{i+2,i+3}=0 \\
      \sigma^q\fmm_{i,i+1}-(\alpha\sigma^q+\beta)\fmm_{i+2,i+3}=0 \\
    \end{cases} \]
  whose only solution is $\fmm_{i,i+1}=\fmm_{i+2,i+3}=0$.
  Finally, entries of Type III must satisfy
  $\fmm_{1j}=0$ for all $j$.

  The case $m$ even is entirely analogous and can be proven using
  Corollary~\ref{dcore}.
\end{proof}
 By Lemma~\ref{llin} and Corollary~\ref{cinj}, $\varphi$ is
 a linear encoding.
\subsection{Error correction}
First of all, observe that in order to recover the original message
sent $\mathbf{w}=(w_1,\ldots,w_K)$ it is enough to guarantee that the
entries of a received vector $\mathbf{r}\in\FF_{q^2}^N$ needed to obtain
the elements $\fmm_{ij}$ of Theorem~\ref{Decode} and Corollary~\ref{dcore}, are
correct. This could be implemented using standard techniques from coding
theory, e.g. syndrome decoding, see \cite[Chapter 1]{MS}, but such an approach would be very inefficient
in the case of (polar)
Grassmann codes, since the parity check matrix is  huge.

A different, more viable, approach is what we proposed in~\cite{IL17}
for line polar Grassmann codes of either orthogonal or symplectic type and we
here extend to the Hermitian case.
Suppose $r_x$ is an entry in the received vector $\mathbf{r}$ which we want
to insure to be correct. So, we take the line $\ell=\langle A_{x-1},B_{x-1}
\rangle$ with index $\iota(\ell)=x-1$ and consider the pencil
$\Pi_{\ell}$ of all the totally singular
planes passing through $\ell$.

For each $\pi\in\Pi_{\ell}$ choose $3$ non-concurrent lines $r,s,t$ of $\pi$ different from $\ell$. Observe that the values of $r_{\iota(r)+1}$
$r_{\iota(s)+1}$ and
$r_{\iota(t)+1}$ are
sufficient to reconstruct the restriction of the
alternating form $\omega_{\mathbf w}$
to $\pi$, say $\omega^{\pi}$. If
$\omega^{\pi}(A_{x-1},B_{x-1})=r_x$ for a sufficient number
of planes, then we can assume that the received value $r_x$ is
correct. Otherwise, we replace $r_x$ with the majority of
the values $\omega^{\pi}(A_{x-1},B_{x-1})$ assumes as $\pi$ varies in $\Pi_{\ell}$.
We leave to a future work a detailed analysis of the performance of
this error correcting algorithm.

\section*{Acknowledgments}
Both authors are affiliated with GNSAGA of INdAM (Italy) whose support they
acknowledge.


\vskip.2cm
\noindent
\begin{minipage}[t]{\textwidth}
Authors' addresses:
\vskip.2cm\noindent\nobreak
\centerline{
\begin{minipage}[t]{7cm}
Ilaria Cardinali\\
Department of Information Engineering and Mathematics\\University of Siena\\
Via Roma 56, I-53100, Siena, Italy\\
ilaria.cardinali@unisi.it\\
\end{minipage}\hfill
\begin{minipage}[t]{7cm}
Luca Giuzzi\\
D.I.C.A.T.A.M. \\ Section of Mathematics \\
University of Brescia\\
Via Branze 43, I-25123, Brescia, Italy \\
luca.giuzzi@unibs.it
\end{minipage}}
\end{minipage}

\end{document}